\documentclass[12pt]{amsart}
\usepackage{times}
\DeclareMathAlphabet{\mathpzc}{OT1}{pzc}{m}{it}

\usepackage[T1]{fontenc}
\usepackage{dsfont}
\usepackage{mathrsfs}
\usepackage[colorlinks]{hyperref}
\usepackage{xcolor}
\usepackage[a4paper,asymmetric]{geometry}
\usepackage{mathscinet}
\usepackage{fullpage}
\usepackage{latexsym}
\usepackage{graphicx}
\usepackage{epstopdf}
\usepackage{amsthm}
\usepackage{amssymb}
\usepackage{amsfonts}
\usepackage{amsmath}

%% Theorems and the like

\def\az{\alpha}  \def\bz{\beta}
\def\cz{\chi}    \def\dz{\delta}
    
  \def\kz{\kappa}
\def\lz{\lambda} \def\mz{\mu}

\def\qz{\psi}
        \def\sz{\sigma}

\def\qd{\quad}
\def\qqd{\qquad}

\def\lt{\left}
\def\rt{\right}

\def\PP{\mathbb{P}}
\def\EE{\mathbb{E}}

\newcommand{\mathsym}[1]{{}}

\newcommand{\Be}{\begin{equation}}
\newcommand{\Ees}{\end{equation*}}
\newcommand{\Bes}{\begin{equation*}}
\newcommand{\Ee}{\end{equation}}

\def\le{\leqslant}
\def\ge{\geqslant}
\def\leq{\leqslant}
\def\geq{\geqslant}

\newtheorem{thm}{Theorem}[section]

\newtheorem{lem}[thm]{Lemma}
\newtheorem{rem}[thm]{Remark}
\newtheorem{cor}[thm]{Corollary}

\newtheorem{ass}[thm]{Assumption}

\numberwithin{equation}{section} \allowdisplaybreaks[4]

\def\prf{\medskip \noindent {\bf Proof}. }
\def\deprf{\quad $\square$ \medskip}
\def\bg{\begin}
\def\be{\begin{equation}}
\def\de{\end{equation}}
\def\ben{\begin{equation*}}
\def\den{\end{equation*}}

\def\dear{\end{eqnarray*}}
\def\lb{\label}

\def\dps{\displaystyle}

\def\ben{\bg{enumerate}}
\def\den{\end{enumerate}}

\def\d{\text{\rm d}}
\def\p{\partial}
\def\L{\mathcal{L}}
\def\C{\mathcal{C}}
\def\B{\mathcal{B}}

\def\PP{\mathbb{P}}
\def\EE{\mathbb{E}}
\def\RR{\mathbb{R}}

\def\NN{\mathbb{N}}

\begin{document}

\title[Quantitative diffusion approximation for the Neutral $r$-Alleles Wright-Fisher Model with Mutations]
{Quantitative diffusion approximation for the Neutral $r$-Alleles Wright-Fisher Model with Mutations}

\author[P.~Chen]{Peng Chen}
\address[P.~Chen]{School of Mathematics, Nanjing University of Aeronautics and
Astronautics, Nanjing 211106, China}
\email{chenpengmath@nuaa.edu.cn}

\author[J.~Xiong]{Jie Xiong}
\address[J.~Xiong]{Department of Mathematics, Southern University of Science and Technology, Shenzhen,
China}
\email{xiongj@sustc.edu.cn}

\author[L.~Xu]{Lihu Xu}
\address[L.~Xu]{1. Department of Mathematics, Faculty of Science and Technology, University of Macau, Macau S.A.R., China. 2. Zhuhai UM Science \& Technology Research Institute, Zhuhai, China}
\email{lihuxu@um.edu.mo}

\author[J.~Zheng]{Jiayu Zheng}
\address[J.~Zheng]{Faculty of Computational Mathematics and Cybernetics, Shenzhen MSU-BIT University, Shenzhen, 518172, China.}
\email{jy\_zheng@outlook.com}

\keywords{Neutral $r$-Alleles Wright-Fisher Model; Stochastic Differential Equation; Lindeberg Principle; Markov Property.  \\ Equal contributions: the authors are listed in the alpha-beta order.}
\makeatletter

%\@namedef{subjclassname@2020}{%
  %\textup{2020} Mathematics Subject Classification}
%\makeatother

%\subjclass[2020]{60H10; 37M25; 60G51; 60H07; 60H35; 60G52.}

\begin{abstract}
We apply a Lindeberg principle under the Markov process setting to approximate the Wright-Fisher model with neutral $r$-alleles using a diffusion process, deriving an error rate based on a function class distance involving fourth-order bounded differentiable functions. This error rate consists of a linear combination of the maximum mutation rate and the reciprocal of the population size. Our result improves the error bound in the seminal work \cite{EN77}, where only the special case $r=2$ was studied.  
\end{abstract}

\maketitle

\section{Introduction}
The one-locus, two-alleles Wright-Fisher model within a population of $N$ diploid individuals is a homogeneous Markov chain $\{ Y(n), n \in \NN \}$ where $\NN := \{0, 1, 2, \ldots \}$, characterized by a state space $\lt\{ \frac{i}{2N}: i = 0, 1, \ldots, 2N \rt\}$ and the transition probabilities given as: $\forall \ y \in \lt\{ \frac{i}{2N}: i = 0, 1, \ldots, 2N \rt\}$
\be\label{WF-D}
\PP \lt\{Y(n+1) = \frac{j}{2N} \Big| Y(n) = y \rt\} = C_{2N}^j (y^{\#})^j (1 - y^{\#})^{2N - j}.
\de
Here $C_{2N}^j$ is the standard combinatorial number, and $y^{\#}$ is the proportion adjusted by mutations. Specifically, for given mutation parameters $\mz_1, \mz_2 \in [0,1]$, we define
\begin{align*}
y^{\#} = \lt(1-\mz_1\rt)y+\mz_2(1- y),
\end{align*}
and it is easy to verify that $y^{\#} \in (0, 1)$ holds true for all 
$y \in \lt\{ \frac{i}{2N}: i = 0, 1, \ldots, 2N \rt\}$. The expression (\ref{WF-D}) implies that given $Y(n) = y$, the subsequent step in the process satisfies $2N Y(n+1) \sim \B (2N, y^{\#})$.  The seminal work \cite{EN77} studied the error bound between $\{ Y(n), n \in \NN \}$ and the stochastic process $\{X(t), t \geq 0\}$ with a state space of $[0, 1]$ governed by 
\be\lb{WF-C}
\d X(t) =\sqrt{\frac{X(t)(1-X(t))}{2N}} \d B(t) +\left[- \mz_1 X(t) + \mz_2 (1 - X(t))\right] \d t, \qd X(0) = x,
\de
where $B(t)$ represents the standard Brownian motion, and proved the following theorem by a certain clever Taylor expansion.  
\begin{thm} \label{EN}
$($\cite{EN77}$)$ Assume that $N\geq1$, $\mu_{1},\mu_{2}\in[0,1]$. Then, for any $x\in \lt\{ \frac{i}{2N}: i = 0, 1, \ldots, 2N \rt\}$ and $f\in\mathcal{C}_{b}^{6}(I)$, we have
\begin{align*}
\left|\mathbb{E}_{x}[f(Y(n))]-\mathbb{E}_{x}[f(X(n))]\right|
\leq&\frac{1}{2}\left\{\frac{\max\{\mu_{1},\mu_{2}\}}{2}\|f^{(1)}\|+\frac{\theta}{16N}\|f^{(2)}\|
+\frac{1}{216\sqrt{3}N}\|f^{(3)}\|\right\}\\
&+
\frac{1}{4}\left\{\frac{9\max\{\mu_{1}^{2},\mu_{2}^{2}\}}{2}\sum_{j=1}^{6}\|f^{(j)}\|+\frac{7}{16N^{2}}\sum_{j=2}^{6}\|f^{(j)}\|\right\},
\end{align*}
where
\begin{align*}
\theta=\frac{1+4\max\{\mu_{1},\mu_{2}\}}{1+2(\mu_{1}+\mu_{2})}.
\end{align*}
\end{thm}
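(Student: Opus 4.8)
\medskip
\noindent\textbf{Proof proposal.}
I would prove this by a one-step Lindeberg--type comparison propagated along the path by the Markov property, with the error kept \emph{uniformly} in $n$ by a contraction estimate for the derivatives of the diffusion semigroup; this organises the same information that a careful Taylor expansion of the two one-step laws provides. Let $P$ be the one-step transition operator of the chain, $Pg(y)=\mathbb{E}_{y}[g(Y(1))]=\sum_{j=0}^{2N}C_{2N}^{j}(y^{\#})^{j}(1-y^{\#})^{2N-j}g\bigl(\tfrac{j}{2N}\bigr)$, and let $(\mathcal{T}_{t})_{t\ge0}$ be the transition semigroup of \eqref{WF-C}, with generator
\[
\mathcal{L}g(x)=\frac{x(1-x)}{4N}\,g''(x)+\bigl(-\mu_{1}x+\mu_{2}(1-x)\bigr)g'(x).
\]
Since $Y(0)=X(0)=x$ and both processes are Markov, the standard telescoping $P^{n}-\mathcal{T}_{1}^{n}=\sum_{k=0}^{n-1}P^{k}(P-\mathcal{T}_{1})\mathcal{T}_{1}^{n-1-k}$, evaluated at $x$, yields
\[
\mathbb{E}_{x}[f(Y(n))]-\mathbb{E}_{x}[f(X(n))]=\sum_{k=0}^{n-1}\mathbb{E}_{x}\Bigl[\bigl((P-\mathcal{T}_{1})(\mathcal{T}_{n-1-k}f)\bigr)(Y(k))\Bigr],
\]
so the problem reduces to (i) a pointwise bound on the one-step error $(P-\mathcal{T}_{1})g$ for smooth $g$, and (ii) summing its contributions along $g=\mathcal{T}_{m}f$, $m=0,\dots,n-1$.

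For (i), expand both laws about $y$. On the chain side use Taylor's formula to order six together with the moments of $2NY(1)\sim\mathcal{B}(2N,y^{\#})$: $\mathbb{E}_{y}[Y(1)-y]=y^{\#}-y=-\mu_{1}y+\mu_{2}(1-y)$, $\mathrm{Var}_{y}(Y(1))=\tfrac{y^{\#}(1-y^{\#})}{2N}$, with all higher central moments $O(N^{-2})$ (the odd ones $O(N^{-3})$). On the diffusion side use the time-Taylor expansion $\mathcal{T}_{1}g=g+\mathcal{L}g+\tfrac12\mathcal{L}^{2}g+\mathcal{R}_{3}g$ with $\|\mathcal{R}_{3}g\|\le\tfrac16\sup_{0\le s\le1}\|\mathcal{L}^{3}\mathcal{T}_{s}g\|$; since $\mathcal{L}^{2}g$ involves derivatives up to order four and $\mathcal{L}^{3}g$ up to order six, this is exactly where the $\mathcal{C}_{b}^{6}$ hypothesis enters. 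The drift terms agree exactly, the diffusion terms agree up to $\frac{y^{\#}(1-y^{\#})-y(1-y)}{4N}g''=O\!\bigl(\tfrac{\max\{\mu_{1},\mu_{2}\}}{N}\bigr)\|g''\|$, and after cancellation one is left with a bound of the schematic form
\[
\bigl|(P-\mathcal{T}_{1})g(y)\bigr|\ \lesssim\ \frac{1}{N^{2}}\sum_{j=2}^{6}\|g^{(j)}\|\ +\ \frac{\max\{\mu_{1},\mu_{2}\}}{N}\bigl(\|g''\|+\|g'''\|\bigr)\ +\ \max\{\mu_{1}^{2},\mu_{2}^{2}\}\sum_{j=1}^{6}\|g^{(j)}\|,
\]
the precise constants ($\tfrac1{216\sqrt3}$, $\tfrac1{16}$, $\tfrac{7}{16}$, $\tfrac92$, \dots) coming from the extremal values of the polynomial coefficients involved, e.g.\ $\sup_{[0,1]}|y(1-y)(1-2y)|=\tfrac1{6\sqrt3}$ and $\sup_{[0,1]}y(1-y)=\tfrac14$.

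For (ii), the point that makes the final bound independent of $n$ is the derivative contraction
\[
\bigl\|(\mathcal{T}_{t}f)^{(j)}\bigr\|\ \le\ e^{-\lambda_{j}t}\,\|f^{(j)}\|,\qquad \lambda_{j}:=j(\mu_{1}+\mu_{2})+\frac{j(j-1)}{4N}.
\]
This follows by differentiating the Kolmogorov equation $\partial_{t}\mathcal{T}_{t}f=\mathcal{L}\mathcal{T}_{t}f$ exactly $j$ times: because the diffusion coefficient is quadratic and the drift affine, $v_{j}:=\partial_{x}^{j}\mathcal{T}_{t}f$ satisfies the \emph{closed} scalar parabolic equation $\partial_{t}v_{j}=\frac{x(1-x)}{4N}v_{j}''+\bigl(-\mu_{1}x+\mu_{2}(1-x)+\frac{j(1-2x)}{4N}\bigr)v_{j}'-\lambda_{j}v_{j}$, whose drift points inward at both endpoints; hence $v_{j}(t)=e^{-\lambda_{j}t}\widetilde{\mathcal{T}}^{(j)}_{t}v_{j}(0)$ for a conservative Markov semigroup $\widetilde{\mathcal{T}}^{(j)}$, and $\|v_{j}(t)\|\le e^{-\lambda_{j}t}\|v_{j}(0)\|$ (for $j=1$ this is just $\partial_{x}\mathbb{E}_{x}[X(t)]=e^{-(\mu_{1}+\mu_{2})t}$ and positivity of the flow). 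Substituting into the telescoped sum and using $\sum_{m\ge0}e^{-\lambda_{j}m}=(1-e^{-\lambda_{j}})^{-1}\le 1+\lambda_{j}^{-1}$: the $N^{-2}$ and $\tfrac{\max\{\mu_{1},\mu_{2}\}}{N}$ prefactors multiplying $\|g''\|,\|g'''\|$ both become $O(N^{-1})$ (since $\lambda_{2},\lambda_{3}\ge \tfrac1{2N}$ and $\max\{\mu_{1},\mu_{2}\}\le\mu_{1}+\mu_{2}$), the $\max\{\mu_{1}^{2},\mu_{2}^{2}\}$ prefactor multiplying $\|g'\|$ becomes $O(\max\{\mu_{1},\mu_{2}\})$ (since $\lambda_{1}=\mu_{1}+\mu_{2}\ge\max\{\mu_{1},\mu_{2}\}$), and everything else is absorbed by the crude $O(\max\{\mu_{1}^{2},\mu_{2}^{2}\})$ and $O(N^{-2})$ terms carrying $\sum_{j}\|f^{(j)}\|$; the constant $\theta$ is precisely the optimised ratio of the residual $\|g''\|$-coefficient to $\lambda_{2}$.

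The main obstacle is step (i): one must verify that the $O(\max\{\mu_{1},\mu_{2}\})$ drift correction and the $O(N^{-1})$ variance correction cancel \emph{exactly} against the matching terms of $\mathcal{L}g$ inside $\mathcal{T}_{1}g$, so that the surviving residuals are genuinely of second order, and then track the explicit constants through the sixth-order Taylor remainder, $\mathcal{R}_{3}g$, and all mixed-moment terms (a minor technical point being the $\mathcal{C}^{6}$-regularity of $\mathcal{T}_{t}f$ up to the degenerate boundary $\{0,1\}$, which is classical for this Jacobi-type diffusion). The contraction estimate of step (ii) — which is what actually removes the dependence on $n$ — is, by contrast, a short computation once one thinks to differentiate the Kolmogorov equation $j$ times.
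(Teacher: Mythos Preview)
This theorem is not proved in the paper: it is quoted from \cite{EN77} as background, and the paper's contribution is precisely to \emph{improve} it (Corollary~\ref{cor}) using the Lindeberg--Markov framework you have outlined. So there is no ``paper's own proof'' of Theorem~\ref{EN} to compare against; what there is, is the paper's proof of the sharper Corollary~\ref{cor}, and your proposal is essentially a higher-order variant of that argument.

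Concretely: your telescoping $P^{n}-\mathcal{T}_{1}^{n}=\sum_{k}P^{k}(P-\mathcal{T}_{1})\mathcal{T}_{1}^{n-1-k}$ is equation~\eqref{thm-1-2}; your derivative contraction $\|(\mathcal{T}_{t}f)^{(j)}\|\le e^{-\lambda_{j}t}\|f^{(j)}\|$ with $\lambda_{j}=j(\mu_{1}+\mu_{2})+\tfrac{j(j-1)}{4N}$, obtained by differentiating the backward equation $j$ times and applying the maximum principle, is exactly Lemma~\ref{regular} (equation~\eqref{2semi}); and your one-step comparison is Lemma~\ref{compare}. The only substantive difference is that you push the time-Taylor expansion of $\mathcal{T}_{1}g$ to third order, $g+\mathcal{L}g+\tfrac12\mathcal{L}^{2}g+\mathcal{R}_{3}g$, which forces $\mathcal{L}^{3}g$ and hence six derivatives of $f$ --- this is presumably how \cite{EN77} arrived at the $\mathcal{C}_{b}^{6}$ hypothesis. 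The paper's observation is that one can stop at second order with a Lagrange remainder, $\mathcal{T}_{1}g=g+\mathcal{L}g+\tfrac12 P_{\kappa}(\mathcal{L}^{2}g)$ for some $\kappa\in(0,1)$ (equation~\eqref{Taylor}), and pair this with a \emph{third}-order spatial Taylor expansion on the chain side; since $\mathcal{L}^{2}g$ involves only four derivatives, this yields the same $O(N^{-1})+O(\max\{\mu_{1},\mu_{2}\})$ rate under the weaker assumption $f\in\mathcal{C}_{b}^{4}$. In short, your proposal is correct as a route to Theorem~\ref{EN}, and it is the paper's route to Corollary~\ref{cor} once one realises the extra order of expansion is unnecessary.
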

Note that the spaces of the Markov chain $\{ Y(n), n \in \NN \}$ and
the diffusion process $\{X(t), t \ge 0\}$ are  $\lt\{ \frac{i}{2N}: i = 0, 1, \ldots, 2N \rt\}$ and $[0,1]$ respectively, but the relation in the previous theorem implies an error bound between the distributions of $Y(n)$ and $X(n)$ in a certain smooth Wasserstein distance, see \cite{ChMe08}.

Wright-Fisher models can be traced back to the pioneering works of \cite{F30} and \cite{W31}, where the binomial distribution was first explicitly used as a model for gene frequency in a finite population after a single generation of random mating. For an in-depth exploration of the Wright-Fisher model incorporating mutations, further insights can be found in \cite{D17,EK93,GHK21,HJT17,T12,Wan23}, and the related references therein. These references provide comprehensive discussions on various aspects of this model and its implications.

We shall consider in this paper the one-locus, $r$-alleles Wright-Fisher model with $N$ diploid individuals. There are $r$ ($r \geq 2$) alleles $A_1, A_2, \ldots, A_r$ at the given locus with $r(r+1)/2$ possible genotypes, i.e., $A_i A_j$, $1 \leq i \leq j \leq r$. The corresponding Wright-Fisher model is a homogeneous $(r-1)$-dimensional Markov chain, which will be specified with more details in next section.  We shall derive the approximating diffusion process of the Markov chain and prove a non-asymptotic error bound between   them. Our method is based on a Lindeberg principle under the Markov process setting, as $r=2$ we can derive a better error bound.    
 %with state space
%$$
%I_{2N} = \lt\{ \bigg( \frac{\az_{1}}{2N}, \ldots, \frac{\az_{r-1}}{2N} \bigg): \text{$\az_{i} \in \{0, 1, \ldots, 2N \}$ for $1 \leq i \leq r-1$ and $\sum_{i=1}^{r-1} \az_{i} \leq 2N$}\rt\}.
%$$

The paper is organized as follows. In Section \ref{frame}, we formally introduce the neutral $r$-alleles Wright-Fisher model with mutations and outline the primary results regarding diffusion approximation. In Section \ref{auxiliary}, we  present crucial auxiliary lemmas, which play a crucial role in proving the main results. In Section \ref{proof}, we provide an in-depth breakdown of the proof for the main conclusion. In Appendix \ref{AppendixA}, we focus on establishing the existence and uniqueness of the solution to equation \eqref{WF-CH} below. This appendix serves as supplementary proof material reinforcing the foundational aspects of the proposed framework.

At the conclusion of this section, we introduce notations that will be frequently used in subsequent discussions. For  any vectors $x, y \in \RR^{r-1}$ with $r \ge 2$ and matrices $A, B \in \RR^{(r-1) \times (r-1)}$, we introduce the inner product $\langle \cdot, \cdot \rangle$ and $\langle \cdot, \cdot \rangle_{\rm HS}$ as follows:
\be\lb{inner}
\langle x, y \rangle := \sum_{i=1}^{r-1} x_i y_i, \qd \langle A, B \rangle_{\rm HS} := {\rm tr} (B^\top A) = \sum_{i, j =1}^{r-1} a_{ij} b_{ij}.
\de
The inner product $\langle \cdot, \cdot \rangle_{\rm HS}$ also known as the Hilbert-Schmidt inner product in $\RR^{(r-1) \times (r-1)}$. For any matrix $E = \lt( e_{ij} \rt)_{1 \leq i, j \leq r-1} \in \RR^{(r-1) \times (r-1)}$, two commonly employed norms induced by the Hilbert-Schmidt inner product are defined as:
$$
\| E \|_{\rm op} = \sup_{\| x \| \leq 1} \| E x \|, \qd \| E \|_{\rm HS} = \sqrt{\langle E, E \rangle_{\rm HS}} = \sqrt{\sum_{i, j =1}^{r-1} e_{ij}^2},
$$
where $\| \cdot \| := \sqrt{\langle \cdot, \cdot \rangle}$ is the Euclidean norm in $\RR^{r-1}$. The norms $\| \cdot \|_{\rm op}$ and $\| \cdot \|_{\rm HS}$ also known as the operator norm and the Hilbert-Schmidt norm in $\RR^{(r-1) \times (r-1)}$. According to  references \cite{D69,DNN11},  the following relation holds: $\| AB \|_{\rm HS} \leq \| A \|_{\rm op} \| B \|_{\rm HS}$ for all $A, B \in \RR^{(r-1) \times (r-1)}$. Additionally, it is valid that:
\be\lb{relation}
\| E \|_{\rm op} = \sup_{\| x\| \leq 1, \|y \| \leq 1} \lt| \langle E, x y^\top \rangle_{\rm HS} \rt| \qd \text{and} \qd \| E \|_{\rm op} \leq \| E \|_{{\rm HS}} \leq \sqrt{r-1} \| E \|_{\rm op}.
\de

Let $\C_b (\RR^{r-1})$ with $r \geq 2$ denote the set of all bounded continuous functions $f : \RR^{r-1} \to \RR$, and $\C_b^k (\RR^{r-1})$ with $k \geq 1$ represents the set of all bounded $k$-th order continuously differentiable functions. For any $f \in \C^2_b (\RR^{r-1})$, denote by $\nabla f(x) \in \RR^{r-1}$ and $\nabla^2 f(x) \in \RR^{(r-1) \times (r-1)}$, the gradient and the Hessian matrix of $f (x)$ respectively. Moreover, in order to state our main results,  we introduce additional notation.  For any $f \in \C^n (\RR^{r-1})$ and $1 \leq m \leq n$, we denote  $\nabla^m f (x)$  as the $m$-th order differentiation of $f(x)$, defined as:
$$
\nabla^m f (x) = \lt\{ \frac{\p^m}{\p x_{k_1} \cdots \p x_{k_m}} f (x) \ \Big| \ x \in \RR^{r-1}, \ k_1, \ldots, k_m \in \{1, \ldots, r-1\} \rt\}.
$$
Define the $L^2$ norm of $\nabla^m f (x)$ as
\be\lb{norm}
\big\| \nabla^m f (x) \big\|_2 := \lt[ \sum_{k_1, \ldots, k_m =1}^{r-1} \lt( \frac{\p^n}{\p x_{k_m} \cdots \p x_{k_1}} f (x) \rt)^2 \rt]^{1/2}.
\de
According to the definition of $\| \cdot \|_2$ norm given in (\ref{norm}), it holds that
$$
\| \nabla f \|_2 = \| \nabla f\|, \qd \text{and} \qd \| \nabla^2 f \|_2 = \| \nabla^2 f \|_{\rm HS}.
$$
This implies the $\| \cdot \|_2$ norm is consistent with the Euclidean norm (specifically for $m=1$), and similarly, the Hilbert-Schmidt norm (for $m=2$).

\section{Setting and main results}\label{frame}

We consider in this paper the one-locus, $r$-alleles Wright-Fisher model with $N$ diploid individuals. There are $r$ ($r \geq 2$) alleles $A_1, A_2, \ldots, A_r$ at the given locus with $r(r+1)/2$ possible genotypes, i.e., $A_i A_j$, $1 \leq i \leq j \leq r$. The Wright-Fisher model is a homogeneous Markov chain $\big\{Y (n) = (Y_1 (n), Y_2 (n), \ldots, Y_{r-1} (n)), n \in \NN \big\}$ with state space
$$
I_{2N} = \lt\{ \bigg( \frac{\az_{1}}{2N}, \ldots, \frac{\az_{r-1}}{2N} \bigg): \text{$\az_{i} \in \{0, 1, \ldots, 2N \}$ for $1 \leq i \leq r-1$ and $\sum_{i=1}^{r-1} \az_{i} \leq 2N$}\rt\}.
$$
The transition probability of $Y (n)$ is given by the following multinomial formula
\be\lb{WF-D-r}
\PP \lt( Y {(n+1)} = \Big(\frac{\az_{1}}{2N}, \ldots, \frac{\az_{r-1}}{2N} \Big) \ \Big| \ Y (n) = y \rt) = \frac{(2N) !}{(\az_{1})! \cdots (\az_{r-1})!} \prod_{i=1}^{r-1} \lt( y_{i}^{\#} \rt)^{\az_{i}},
\de
where $\az_{i} \in \{1, 2, \ldots, 2N\}$ satisfies $\sum_{i=1}^{r-1} \az_{i} \leq 2N$ and $y \in I_{2N}$. Here $y_{i}^{\#}$ represents the proportion of the allele $A_i$ adjusted by $u_{ij}$,  defined as
\be\lb{muta-r}
y_i^{\#} = y_i \lt( 1 - \sum_{j=1}^r u_{ij} \rt) + \sum_{j =1}^r u_{ji} y_j,
\de
where $y_r = 1- \sum_{i=1}^{r-1} y_i$ and $u_{ij} \in [0, 1]$ are the mutation parameters denoting the mutation probabilities from allele $A_i$ to $A_j$. The transition probability, as given in (\ref{WF-D-r}), implies that when $Y(n) = (y_1, \ldots, y_{r-1}) \in I_{2N}$, the distribution of $Y (n+1)$ satisfies the multinomial distribution
$$
\Big(Y_1 (n+1), \ldots, Y_{r-1} (n+1) \Big) \sim (2N)^{-1} {\rm multinomial} \lt( 2N, \ \big( y_1^{\#}, \ldots, y_{r-1}^{\#} \big) \rt).
$$
Regarding the mutation mechanism within this model, we outline the following essential assumptions.
\begin{ass}\lb{ass}
\
\begin{itemize}
  \item[(i).] The two genes carry by an individual mutate independently, and the mutation parameters satisfy $u_{ii} = 0$ for each $i \in \{1, \ldots, r-1\}$.
  \item[(ii).] For each $i \in \{1, \ldots, r-1\}$, $u_{ki} = u_{ri}$ holds for all $k \neq i$.
\end{itemize}
\end{ass}

According to \cite[Chapter 10]{EK09}, the $r$-alleles Wright-Fisher model $\{Y(n), n \in \NN\}$ can be typically approximated by a $(r-1)$-dimension diffusion process $\{X (t), t \geq 0\}$ defined in the state space $I$ as
$$
I = \lt\{ x = ( x_1, \ldots, x_{r-1} ) \in \RR^{r-1}: \text{$0 \leq x_i \leq 1$ for $1 \leq i \leq r-1$ and $\sum_{i=1}^{r-1} x_i \leq 1$}\rt\}.
$$
Define by $\mathcal{L}$ the second order elliptic operator as
\be\lb{oper}
\mathcal{L} = \frac{1}{4N} \sum_{i, j = 1}^{r-1} a_{ij} (x) \frac{\p^2}{\p x_i \p x_j} + \sum_{i=1}^{r-1} b_i (x) \frac{\p}{\p x_i}, \qd \forall x \in I,
\de
where
\begin{align}
a_{ij} (x) &= x_i (\dz_{ij} - x_j), \qd x \in I, \label{arr} \\
b_i (x) &= - x_i \sum_{j=1}^r u_{ij} + \sum_{j=1}^r x_j u_{ji}, \qd x \in I, \label{mul}
\end{align}
 and $x_{r}=1-\sum_{i=1}^{r-1}x_{i}\in[0,1]$. For simplicity, we denote the matrix $A(x) = \lt( a_{ij} (x) \rt)_{1 \leq i, j \leq r-1} \in \RR^{(r-1) \times (r-1)}$ and the vector $b(x) = \lt( b_i (x) \rt)_{1 \leq i \leq r-1} \in \RR^{r-1}$. The condition $x\in I$ ensures that $A (x)$ is a positive semi-definite matrix. Consequently,  there exists a real-valued matrix $\sz (x)$ such that $\sz (x) \sz^{\top} (x) = A (x)$. The diffusion process $X (t)$ in $I$ corresponding to the generator $\mathcal{L}$ is defined as
\be\lb{WF-CH}
\d X (t) = \frac{1}{\sqrt{2 N}} \sz (X(t)) \d B(t) + b(X(t)) \d t, \qd X (0) = x \in I.
\de
By the same argument as presented in the proof outlined in \cite{Eth76}, we can deduce the existence and uniqueness of the martingale solutions to the above equation. This signifies the establishment of solutions' existence and uniqueness for the martingale problem governed by $\mathcal{L}$, starting at any $x\in I$. In particular, when $r=2$, we can obtain the existence and uniqueness of the strong solution with the help of \cite[Theorem 1]{YW71}. For the convenience of readers, we offer the detailed proof in the Appendix \ref{AppendixA}.

From the definitions of $X (t)$ and $Y (n)$, it is clear that the state spaces adhere to $I_{2N} \subset I$.  The main purpose of our work is to estimate the error in the approximation between the diffusion process $X (t)$
and the Markov chain $Y (n)$. The key approach involves employing the Lindeberg principle in the Markov process setting, initially introduced by \cite{Lin22}  to establish the classical central limit theorem. This principle has found extensive applications and generalisations across various research problems, as evidenced by its utilization in \cite{CSZ16,Cha06, CLX22, CSX20, CX19,CCK14,CCK17,KM11,TV11} and the references therein.

Now, we state our main theorem concerns estimating the error involved in the diffusion approximation of the $r$-alleles Wright-Fisher model.

\begin{thm}\lb{main1}
Suppose that Assumption \ref{ass} holds and $r\geq3$. For each $n \in \NN$, $f \in \mathcal{C}_b^4 (I)$ and $x \in I_{2N}$, it holds that
\begin{align*}
& \lt| \EE_x \lt[ f (X (n)) \rt] - \EE_x \lt[ f( Y (n)) \rt] \rt| := \lt| \EE \lt[ f (X (n)) \big| X(0) = x \rt] - \EE \lt[ f( Y (n)) \big| Y(0) = x \rt] \rt| \\
& \qd \leq \sum_{m=1}^4 C_m \lt\{ \frac{1 - \exp \lt[-n \lt( \min_{1 \leq k_1, \ldots, k_m \leq r-1} \lz_{k_1, \ldots, k_m} \rt) \rt]}{1- \exp \lt[ - \min_{1 \leq k_1, \ldots, k_m \leq r-1} \lz_{k_1, \ldots, k_m} \rt]} \rt\} \sup_{x \in I}\Big\|  \nabla^m f (x) \Big\|_2.
\end{align*}
The constants $\lz_{k_1, \ldots, k_m}$ depend on the mutation parameters, i.e.,
\be\lb{lz}
\lz_{k_1, \ldots, k_m} = \sum_{j=1}^r \sum_{i=1}^m u_{k_i j} + \sum_{i=1}^m u_{r k_i} + \frac{m(m - 1)}{4N}, \qd m = 1, 2, 3, 4.
\de
and $C_m$ are define as
\begin{align*}
C_1 &= \frac{1}{2} b^* u^*,\\
C_2 &= \frac{\sqrt{r-2}}{8N\sqrt{r-1}}\left(2u^{*}+\frac{1}{2N}\right)+\frac{b^{*}}{2}\left(b^{*}+\frac{\sqrt{5}}{4N}\right)+\frac{(r-1)u^{*}}{\sqrt{2}}\left(\frac{1}{N}+u^{*}\right),\\
C_3 &= \frac{\sqrt{r-2}}{8N\sqrt{r-1}}\left(b^{*}+\frac{b^{*}}{\sqrt{r-1}}+\frac{3\sqrt{2}}{4N}\right)+\frac{1}{6}(r-1)^{\frac{3}{2}}\left[\frac{1}{32N^{2}}+\frac{3u^{*}}{8N}+(u^{*})^{3}\right],\\
C_4 &=\frac{r-2}{32N^{2}(r-1)^{\frac{3}{2}}},
\end{align*}
where $i^* := \arg\max\limits_{i} \lt( \sum_{j=1}^r u_{ij} + u_{ri}\rt)$ and
$$
b^* = \lt[ \lt( \sum_{j=1}^r u_{i^* j} \rt)^2 + \sum_{i \neq i^*} u_{ri}^2 \rt]^{1/2}, \qd u^* = \dps\max_{1 \leq k \leq r-1} \bigg( \sum_{j=1}^r u_{kj} + u_{rk} \bigg).
$$
\end{thm}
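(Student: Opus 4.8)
The plan is to run a Lindeberg-type telescoping at the level of semigroups, reduce everything to a single one-step comparison of the chain with the diffusion, bound that one-step error by the derivatives of a smooth test function, and then control the accumulation of one-step errors via a geometric decay estimate for the spatial derivatives of the diffusion semigroup whose rates are precisely the $\lz_{k_1,\dots,k_m}$ of (\ref{lz}). Write $P_tf(x)=\EE_x[f(X(t))]$ for the diffusion semigroup and $Qf(x)=\EE_x[f(Y(1))]$ for the one-step transition operator of the chain, so that $\EE_x[f(X(n))]=P_1^nf(x)$ and $\EE_x[f(Y(n))]=Q^nf(x)$. From the identity $P_1^n-Q^n=\sum_{k=0}^{n-1}Q^k(P_1-Q)P_1^{\,n-1-k}$ together with the fact that $Q$ is a Markov kernel on $I_{2N}$ (so $|Q^kh(x)|\le\sup_{z\in I_{2N}}|h(z)|$ for $x\in I_{2N}$), one obtains
\[
\lt|\EE_x[f(X(n))]-\EE_x[f(Y(n))]\rt|\ \le\ \sum_{j=0}^{n-1}\ \sup_{z\in I_{2N}}\lt|(P_1-Q)\big(P_1^{\,j}f\big)(z)\rt| ,
\]
so it remains to estimate the one-step error for the test functions $g=P_1^jf$ and then to show that the derivatives of $P_1^jf$ decay geometrically in $j$.

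\textbf{One-step error.} For $z\in I_{2N}$ I would split
\[
(P_1-Q)g(z)=\big[\EE_z[g(X(1))]-g(z)-\mathcal Lg(z)\big]-\big[\EE_z[g(Y(1))]-g(z)-\mathcal Lg(z)\big].
\]
Applying Dynkin's formula twice, the first bracket equals $\int_0^1\!\int_0^s\EE_z[\mathcal L^2g(X(u))]\,\d u\,\d s$ and is bounded by $\tfrac12\sup_I|\mathcal L^2g|$; expanding $\mathcal L^2g$ and estimating its coefficients via the explicit forms (\ref{arr}) and (\ref{mul}) of $a_{ij},b_i$ on $I$ gives a bound $\sum_{m=2}^4(\text{const})\sup_I\|\nabla^mg\|_2$, the constants being of the orders $N^{-2}$, $u^*/N$ and $(u^*)^2$. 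For the second bracket I would Taylor-expand $g(Y(1))$ about $z$ to third order with a fourth-order remainder, take expectations, and insert the multinomial moments of $Y(1)-z$, namely $\EE_z[Y_i(1)-z_i]=b_i(z)$, $\EE_z[(Y_i(1)-z_i)(Y_j(1)-z_j)]=\tfrac1{2N}z_i^{\#}(\delta_{ij}-z_j^{\#})+b_i(z)b_j(z)$, and the analogous third and fourth moments; subtracting $\mathcal Lg(z)$ cancels the first-order term exactly, leaves an $O(u^*/N+(u^*)^2)$ discrepancy paired with $\nabla^2g$, an $O(N^{-2}+u^*/N+(u^*)^3)$ tensor paired with $\nabla^3g$ (the $(u^*)^3$ stemming from the $b^{\otimes3}$ part of the third raw moment), and an $O(N^{-2})$ remainder paired with $\nabla^4g$. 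Bounding each tensor pairing by Cauchy--Schwarz, $|\langle\nabla^mg,T\rangle|\le\|\nabla^mg\|_2\,\|T\|_2$, and computing $b^*,u^*$ from Assumption \ref{ass}, one arrives at $\sup_{z\in I_{2N}}|(P_1-Q)g(z)|\le\sum_{m=1}^4C_m\sup_I\|\nabla^mg\|_2$; pinning down the exact constants $C_m$ is a lengthy but routine bookkeeping.

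\textbf{Derivative decay for $P_t$ --- the heart of the proof.} Here the claim to establish is that, for every tuple $(k_1,\dots,k_m)$ with $1\le m\le4$,
\[
\sup_I\lt|\partial_{k_1}\!\cdots\partial_{k_m}P_tf\rt|\ \le\ e^{-\lz_{k_1,\dots,k_m}t}\,\sup_I\lt|\partial_{k_1}\!\cdots\partial_{k_m}f\rt| ,
\]
whence $\sup_I\|\nabla^mP_tf\|_2\le e^{-(\min_{k_1,\dots,k_m}\lz_{k_1,\dots,k_m})t}\sup_I\|\nabla^mf\|_2$. With $u(t,\cdot)=P_tf$ solving $\p_tu=\mathcal Lu$, one commutes $\partial^{\vec k}:=\partial_{k_1}\!\cdots\partial_{k_m}$ through $\mathcal L$: since each $a_{ij}$ is a quadratic polynomial (at most two of the $\partial_{k_p}$ can land on it) and $b_i$ is affine with $\p b_i/\p x_l=-\delta_{il}(\sum_ju_{ij}+u_{ri})$ \emph{diagonal}---this is exactly where Assumption \ref{ass}(ii) is used---a Leibniz computation shows that $v:=\partial^{\vec k}u$ satisfies the \emph{closed} equation
\[
\p_tv=\widetilde{\mathcal L}_{\vec k}\,v-\lz_{k_1,\dots,k_m}\,v,\qquad \widetilde{\mathcal L}_{\vec k}=\mathcal L+\frac1{4N}\sum_{p=1}^m\partial_{k_p}-\frac{m}{2N}\sum_l x_l\,\partial_l ,
\]
where $\widetilde{\mathcal L}_{\vec k}$ is again a second-order operator with diffusion coefficient $\tfrac1{2N}A(x)$ and a drift that still points into $I$ along $\p I$, and $\lz_{k_1,\dots,k_m}$ in (\ref{lz}) is precisely the emerging zeroth-order ``potential'': the $m(m-1)/4N$ comes from the $\binom m2$ pairs of derivatives hitting some $a_{ij}$, and the $\sum_p(\sum_ju_{k_pj}+u_{rk_p})$ from the single derivatives hitting $b$. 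A maximum principle for $v^2$ then closes the estimate: at an interior maximum $x^*$ of $v^2(t,\cdot)$ one has $\widetilde{\mathcal L}_{\vec k}(v^2)(x^*)\le0$ while the carr\'e-du-champ term $\tfrac1{2N}\langle A\nabla v,\nabla v\rangle(x^*)\ge0$, so that $\tfrac{d}{dt}\sup_I v^2(t,\cdot)\le-2\lz_{k_1,\dots,k_m}\sup_I v^2(t,\cdot)$, giving the stated decay (equivalently, one may use a Feynman--Kac formula for the diffusion generated by $\widetilde{\mathcal L}_{\vec k}$).

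Putting the three steps together, $\sup_I\|\nabla^mP_1^jf\|_2\le e^{-(\min_{\vec k}\lz_{\vec k})j}\sup_I\|\nabla^mf\|_2$ turns the first display into $\sum_{j=0}^{n-1}\sum_{m=1}^4C_m\,e^{-(\min_{\vec k}\lz_{\vec k})j}\sup_I\|\nabla^mf\|_2$, and the geometric sum $\sum_{j=0}^{n-1}e^{-(\min_{\vec k}\lz_{\vec k})j}=\frac{1-e^{-n\min_{\vec k}\lz_{\vec k}}}{1-e^{-\min_{\vec k}\lz_{\vec k}}}$ yields the assertion. The genuine obstacle is the third step: beyond the (clean) algebra of $\partial^{\vec k}\mathcal L$, one has to justify that $P_tf\in\C_b^4(I)$ and that its derivatives genuinely solve the above parabolic equations, despite $\sz$ (with $\sz\sz^\top=A$) degenerating and losing Lipschitz regularity near $\p I$; the natural device is to replace $A(x)$ by $A(x)+\e\,\mathrm{Id}$ (uniformly elliptic, smooth $\sz_\e$, and $\e\Delta$ commutes with $\partial^{\vec k}$ so the rates $\lz_{\vec k}$ are unchanged), prove the estimate for the regularised semigroup, and let $\e\downarrow0$; one must also check well-posedness of the martingale problem for $\widetilde{\mathcal L}_{\vec k}$ on $I$, which follows by the argument of Appendix \ref{AppendixA}. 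The constant-tracking in the second step is the remaining, purely mechanical, chore.
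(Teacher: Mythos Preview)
Your proposal is correct and follows the paper's proof almost step for step: the paper's Section 4 carries out precisely the Lindeberg telescoping $P_1^n-Q^n=\sum_{k}Q^k(P_1-Q)P_1^{\,n-1-k}$ (written there with auxiliary processes $\hat X_t(k,z)$ rather than in operator notation), its Lemma \ref{compare} is the one-step estimate, and its Lemma \ref{regular} is exactly the derivative decay via the closed equation $\p_t v=\widetilde{\mathcal L}_{\vec k}v-\lz_{\vec k}v$ for $v=\partial^{\vec k}P_tf$, with your formula for $\widetilde{\mathcal L}_{\vec k}$ matching the paper's operators $\mathcal L_1,\dots,\mathcal L_4$ verbatim. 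The only differences are in implementation: (i) for the chain side the paper stops the Taylor expansion at second order with a third-derivative remainder $\mathcal R_3$, so its $C_4$ arises solely from the diffusion side via $\tfrac12\sup|\mathcal L^2 f|$ --- your plan to go one order further would work but would alter (not improve) the constants; (ii) for the decay estimate the paper sets $\psi=e^{\lz t}v$ and applies the weak parabolic maximum principle to $\p_t\psi=\widetilde{\mathcal L}_{\vec k}\psi$, rather than the $v^2$ argument you sketch, and to secure the extra regularity needed for $m=3,4$ it mollifies $P_tf$ by convolution with a Gaussian $a_\epsilon$ instead of regularising the generator by $A+\epsilon\,\mathrm{Id}$ --- the latter would force the process out of $I$ and require extending the coefficients, whereas the paper's mollification stays on $I$ and uses only that $P_tf\in\C^4$.
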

\begin{rem}
The state space of $Y(n)$ is discrete, while that of $X(n)$ is continuous, the bound in the theorem still gives a bound between
the laws of $Y(n)$ and $X(n)$ in a certain smooth Wasserstein distance, see \cite{ChMe08}.  
\end{rem}

When $r=2$, by the same argument as the proof of Theorem \ref{main1}, we immediately obtain the following corollary, 
which outperforms the result in \cite{EN77}. 

\begin{cor}\label{cor}
Keep the same notation and assumption as above. Let $r=2$. Then, for any $x\in I_{2N}$ and $f\in\mathcal{C}_{b}^{4}(I)$, we have
\begin{align*}
\lt| \EE_x \lt[ f (X (n)) \rt] - \EE_x \lt[ f( Y (n)) \rt] \rt|\leq \sum_{m=1}^4 \tilde{C}_m \lt\{ \frac{1 - \exp \lt[-n\tilde{\lambda}_{m}\rt]}{1- \exp \lt[ -\tilde{\lambda}_{m}\rt]} \rt\}\sup_{x \in I}\Big|f^{(m)}(x) \Big|,
\end{align*}
where $\tilde{\lambda}_{m}=\frac{m(m-1)}{4N}+m(u_{12}+u_{21})$,
\begin{align*}
\tilde{C}_1=\max\{u_{12},u_{21}\}(u_{12}+u_{21}),
\end{align*}
\begin{align*}
\tilde{C}_2=\frac{1}{64N^{2}}+\frac{u_{12}+u_{21}}{16N}+\max\{u_{12}^{2},u_{21}^{2}\}+\frac{5\max\{u_{12},u_{21}\}}{8N},
\end{align*}
\begin{align*}
\tilde{C}_3=\frac{1}{48N^{2}}+\frac{\max\{u_{12},u_{21}\}}{8N}+\frac{\max\{u_{12}^{3},u_{21}^{3}\}}{6},
\end{align*}
\begin{align*}
\tilde{C}_4=\frac{1}{512N^{2}}.
\end{align*}
\end{cor}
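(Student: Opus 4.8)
The plan is to run the Markov-chain Lindeberg (telescoping) scheme underlying Theorem \ref{main1}, specialised to the one-dimensional case $r=2$; the constants sharpen because in this case the state is a scalar (no off-diagonal covariance terms, and $\|\nabla^m f\|_2=|f^{(m)}|$), the diffusion coefficient $a(x)=x(1-x)$ is quadratic, the drift $b(x)=u_{21}-(u_{12}+u_{21})x$ is affine, and the one-step law of $\{Y(n)\}$ is an honest rescaled binomial whose first four moments have closed forms. Write $P_t$ for the transition semigroup of the diffusion \eqref{WF-C} and $Q$ for the one-step operator of $\{Y(n)\}$, so that $\EE_x[f(X(n))]=P_1^{\,n}f(x)$ and $\EE_x[f(Y(n))]=Q^nf(x)$. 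Interpolating through $\EE_x[(P_1^{\,n-k}f)(Y(k))]$ and telescoping yields
\[
\EE_x[f(Y(n))]-\EE_x[f(X(n))]=\sum_{k=0}^{n-1}\EE_x\Big[\big((Q-P_1)(P_1^{\,n-1-k}f)\big)(Y(k))\Big],
\]
so it remains to (i) bound the one-step error $(Q-P_1)g$ on $I_{2N}$ by $\sum_{m=1}^4\tilde{C}_m\|g^{(m)}\|$, and (ii) push the smoothness of $f$ through the powers of $P_1$ with an exponential gain.

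For (ii) I would use the one-dimensional instance of the auxiliary gradient estimates of Section \ref{auxiliary}: differentiating \eqref{WF-C} $m$ times and using that $b'\equiv-(u_{12}+u_{21})$ and $a''\equiv-2$ are constants (so the $m$-th derivative process solves a linear equation with no surviving lower-order inhomogeneity), a Gr\"onwall/maximum-principle argument on $I$ gives $\|(P_1^{\,j}f)^{(m)}\|\le e^{-j\tilde{\lambda}_m}\|f^{(m)}\|$ with $\tilde{\lambda}_m=\frac{m(m-1)}{4N}+m(u_{12}+u_{21})$, i.e. the $r=2$ value of the exponents in \eqref{lz}.

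For (i), fix $y\in I_{2N}$, Taylor-expand $g$ at $y$ to fourth order with integral remainder, and take expectations under $Y(1)$ and under $X(1)$. The binomial step provides the exact identities $\EE_y[Y(1)-y]=b(y)$, $\EE_y[(Y(1)-y)^2]=b(y)^2+\frac1{2N}y^{\#}(1-y^{\#})$, and explicit $O(N^{-2})$ formulas for the third and fourth central moments; for the diffusion I would use the Dynkin expansion $P_1g(y)=g(y)+\mathcal{L}g(y)+\frac12\mathcal{L}^2g(y)+\cdots$ (equivalently iterated It\^o) with quantitative remainders, noting that $b$ affine forces $\mathcal{L}b=-(u_{12}+u_{21})b$, hence $\EE_y[b(X(t))]=b(y)e^{-(u_{12}+u_{21})t}$ and $\EE_y[X(1)-y]=b(y)\frac{1-e^{-(u_{12}+u_{21})}}{u_{12}+u_{21}}$ exactly. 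Matching the first- and second-order terms and collecting residuals: the first-moment discrepancy $b(y)-\EE_y[X(1)-y]$ is $O((u_{12}+u_{21})^2)$ because $b$ itself is $O(u)$ — this is the source of the first-derivative coefficient $\tilde{C}_1$, which already beats the $O(u)$ coefficient of \cite{EN77}; the second-moment residuals contribute the $N^{-2}$, $(u_{12}+u_{21})/N$, $\max\{u_{12}^2,u_{21}^2\}$ and $\max\{u_{12},u_{21}\}/N$ pieces of $\tilde{C}_2$; and the third- and fourth-order Taylor contributions give $\tilde{C}_3$ and $\tilde{C}_4$ from the (small) binomial and diffusion higher moments. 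Carrying the numerical constants through gives exactly the stated $\tilde{C}_m$.

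Finally, substituting $g=P_1^{\,n-1-k}f$ into the telescoping identity, inserting the estimate from (ii), summing the geometric series $\sum_{k=0}^{n-1}e^{-(n-1-k)\tilde{\lambda}_m}=\frac{1-e^{-n\tilde{\lambda}_m}}{1-e^{-\tilde{\lambda}_m}}$, and replacing $\|\cdot\|$ by $\sup_{x\in I}|\cdot|$ gives the corollary. The main obstacle is step (i): obtaining the \emph{sharp} constants requires combining the exact binomial moments with a fully explicit remainder analysis of the diffusion's moment expansion over the \emph{unit} time step — not an infinitesimal one — and using the affine/quadratic structure of $b$ and $a$ to carry out the cancellations; step (ii) is a routine linear-SDE differentiation once the constancy of $b'$ and $a''$ is exploited, and the final assembly is bookkeeping.
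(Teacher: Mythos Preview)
Your overall scheme (Lindeberg telescoping, one-step local comparison, and exponential decay of $\|(P_tf)^{(m)}\|$ via the maximum principle) is exactly what the paper does: the corollary is proved by invoking the $r=2$ cases of Lemma~\ref{regular} (equation \eqref{2semi}) and Lemma~\ref{compare}(ii) and re-running the telescoping of Theorem~\ref{main1}.

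One point in your local analysis is muddled, though. You say you would Taylor-expand $g$ spatially at $y$ and take expectations under both $Y(1)$ \emph{and} $X(1)$; that would force you to control the fourth central moment of $X(1)-y$ for the remainder, which you do not have in usable closed form, and going one order higher would require $g\in C^5$. The paper avoids this on the diffusion side by expanding $P_1g$ in \emph{time} with Lagrange remainder,
\[
P_1g(y)=g(y)+\mathcal{L}g(y)+\tfrac12\,P_\kappa(\mathcal{L}^2 g)(y)\quad\text{for some }\kappa\in(0,1),
\]
and bounding the last term by $\tfrac12\sup|\mathcal{L}^2 g|$ (this is fourth-order in $g$, so $g\in C^4$ suffices). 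In that matching the first-order pieces $b(y)g'(y)$ cancel \emph{exactly} between diffusion and chain, so your attribution of $\tilde{C}_1$ to a ``first-moment discrepancy $b(y)-\EE_y[X(1)-y]$'' is not where the constant actually comes from: $\tilde{C}_1$ arises entirely from the $g'$ contribution to $\mathcal{L}^2 g$, namely the term $b(y)\,b'(y)\,g'(y)$ inside $b\cdot(\mathcal{L}g)'$, giving the coefficient $\sup|b|\cdot|b'|=\max\{u_{12},u_{21}\}(u_{12}+u_{21})$. Your exact formula for $\EE_y[X(1)-y]$ is correct but plays no role in the argument.
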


\begin{rem}
Comparing Corollary \ref{cor} and Theorem \ref{EN}, disregarding the influence of constants, both exhibit a convergence rate of $\frac{1}{N}$. However, Theorem \ref{EN} requires $f\in\mathcal{C}_{b}^{6}(I)$, whereas Corollary \ref{cor} demands only $f\in\mathcal{C}_{b}^{4}(I)$. Consequently, the result obtained from Corollary \ref{cor} is better than that of Theorem \ref{EN}.
\end{rem}

\section{Auxiliary Lemmas}\label{auxiliary}

To proceed with the remaining claims, we require the following preliminary steps.  Using the notations given in (\ref{inner}), we can rewrite the operator of (\ref{oper}) as
\be\lb{oper2}
\L f (x) = \frac{1}{4N} \langle A(x), \nabla^2 f (x) \rangle_{\rm HS} + \langle b(x), \nabla f (x) \rangle, \qd \forall f \in \C^2_b (I), \ x \in I,
\de
where $A(x) = (a_{ij} (x))_{1 \leq i, j \leq r-1}$ and $b(x) = (b_i (x))_{1 \leq i \leq r-1}$. Let $X (t)$ be the diffusion process in $I$ with generator $\mathcal{L}$ given by (\ref{oper}), which is the solution to (\ref{WF-CH}). Then, by \cite[Lemma 9.3.1]{Nor72}, we have
\begin{align}\label{opde}
\lim_{t \to 0+} \lt\| \frac{P_{t} f - f}{t} - \L f \rt\| = 0,
\end{align}
where $P_t$ is the semigroup of the process $X (t)$, i.e., $P_{t}f(x)= \mathbb{E} \lt[ f(X (t)) | X (0) = x \rt]$ for all $x \in I$ and $t \geq 0$. In the following, if there is no ambiguity, the conditional expectation $\EE \lt[ \cdot | X (0) = x \rt]$ is abbreviated as $\EE_x \lt[ \cdot \rt]$. There exists a close relationship between equation (\ref{opde}) and the Kolmogorov backward equation, which becomes apparent in the subsequent proofs. Based on  (\ref{opde}) and the Chapman-Kolmogorov equation, it holds that
\be\label{power}
\frac{\partial^{n}}{\partial t^{n}} ( P_{t} f) (x) = (P_{t} \L^{n} f) (x), \qd \text{for all $x \in I$, $t \geq 0$ and $f \in \mathcal{C}^{2n} (I)$ with $n \geq 1$}.
\de
We begin by providing estimates for the higher-order derivatives of  $P_{t} f$.

\begin{lem}\label{regular}
Suppose that Assumption \ref{ass} holds and $f \in \mathcal{C}_{b}^{4} (I)$. For each $m \in \{1, 2, 3, 4\}$ and $k_1, \ldots, k_m \in \{1, 2, \ldots , r - 1 \}$, we have
\be\lb{semi}
\bigg| \sup_{x \in I} \frac{\p^m}{\p x_{k_1} \cdots \p x_{k_m}} P_t f (x) \bigg| \leq \bigg| \sup_{x \in I} \frac{\p^m}{\p x_{k_1} \cdots \p x_{k_m}} f (x) \bigg| \exp{ \bigg[ -t \lz_{k_1, \ldots, k_m} \bigg]},
\de
where $\lz_{k_1, \ldots, k_m}$ are defined in (\ref{lz}), which are the constants depending on the mutation parameters.

In particular, when $r=2$, that is, $x\in I$, we have
\begin{align}\label{2semi}
\bigg| \sup_{x \in I} \frac{d^{m}}{dx } P_t f (x) \bigg| \leq \bigg| \sup_{x \in I} \frac{d^m}{dx} f (x) \bigg| \exp{ \bigg[ -t\tilde{\lambda}_{m}\bigg]},
\end{align}
where $\tilde{\lambda}_{m}=\frac{m(m-1)}{4N}+m(u_{12}+u_{21})$.
\end{lem}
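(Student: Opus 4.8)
The plan is to differentiate the Kolmogorov backward equation in the spatial variables and obtain a differential inequality for the sup-norm of each fixed mixed partial derivative of $P_t f$. Write $u(t,x) = P_t f(x)$, so that $\p_t u = \mathcal{L}u$ by \eqref{power} with $n=1$. Fix an index string $k_1,\ldots,k_m$ and set $v(t,x) := \frac{\p^m}{\p x_{k_1}\cdots \p x_{k_m}} u(t,x)$. Applying $\frac{\p^m}{\p x_{k_1}\cdots \p x_{k_m}}$ to the equation $\p_t u = \frac{1}{4N}\langle A(x),\nabla^2 u\rangle_{\rm HS} + \langle b(x),\nabla u\rangle$ and using the Leibniz rule, one finds that $v$ satisfies a linear equation of the form
\[
\p_t v = \mathcal{L}v - \lz_{k_1,\ldots,k_m}\, v + (\text{lower-order terms in the derivatives of } u \text{ of order} < m),
\]
where the crucial point is that the coefficient of the order-$m$ term $v$ coming from differentiating $\langle b(x),\nabla u\rangle$ and the diffusion part is exactly $-\lz_{k_1,\ldots,k_m}$ as defined in \eqref{lz}. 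This identification of the constant is where Assumption \ref{ass} enters: since $b_i(x) = -x_i\sum_j u_{ij} + \sum_j x_j u_{ji}$ is affine in $x$, its derivatives $\p_{x_{k_i}} b_\ell$ are constants, and the hypotheses $u_{ii}=0$ and $u_{ki}=u_{ri}$ for $k\neq i$ make the relevant diagonal contribution collapse to $\sum_{j}\sum_{i} u_{k_i j} + \sum_i u_{r k_i}$; the Hessian coefficients $a_{ij}(x) = x_i(\dz_{ij}-x_j)$ are quadratic, so differentiating them $m$ times (with $m\le 2$ spatial hits landing on the same $\nabla^2 u$ slot) produces the $\frac{m(m-1)}{4N}$ term, and third/fourth derivatives of $a_{ij}$ vanish.

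Next I would run a maximum-principle / Gronwall argument on $v$. Let $\phi(t) := \sup_{x\in I}|v(t,x)|$ (using that $I$ is compact and the solution is smooth up to the boundary, which is the content of the existence-uniqueness discussion and Appendix \ref{AppendixA}). At an interior maximum of $v(t,\cdot)$ we have $\nabla v = 0$ and $\nabla^2 v \preceq 0$, and since $A(x)\succeq 0$ this gives $\langle A(x),\nabla^2 v\rangle_{\rm HS}\le 0$ and $\langle b(x),\nabla v\rangle = 0$; at boundary maxima one uses that the drift $b$ and the degeneracy of $A$ point inward (the standard Wright–Fisher boundary behaviour) so no extra positive contribution arises. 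The lower-order terms are handled by inducting on $m$: for the base case $m=1$ there are no lower-order terms, and for $m\ge 2$ the terms of order $<m$ have already been shown to decay at least as fast as $e^{-t\lz}$ with the relevant smaller constant (which dominates $\lz_{k_1,\ldots,k_m}$ — one should check $\lz$ is monotone in $m$ in the appropriate sense, or simply absorb). This yields $\frac{d}{dt}\phi(t) \le -\lz_{k_1,\ldots,k_m}\phi(t)$ in the appropriate (Dini-derivative / viscosity) sense, hence $\phi(t)\le \phi(0)e^{-t\lz_{k_1,\ldots,k_m}}$, which is \eqref{semi}. The case $r=2$ in \eqref{2semi} is the one-dimensional specialization: $b(x) = -x u_{12} + (1-x)u_{21}$, $b'(x) = -(u_{12}+u_{21})$, $a(x)=x(1-x)$, $a''(x)=-2$, so $\tilde\lambda_m = m(u_{12}+u_{21}) + \binom{m}{2}\frac{2}{4N} = m(u_{12}+u_{21}) + \frac{m(m-1)}{4N}$.

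The main obstacle I anticipate is the regularity and boundary analysis: justifying that $P_t f \in \mathcal{C}^4$ up to the (cornered, non-smooth) boundary of the simplex $I$ for merely $\mathcal{C}^4_b$ data, and controlling the behaviour of $v$ at the boundary so the maximum principle applies without generating spurious terms. The operator $\mathcal{L}$ is degenerate-elliptic, degenerating precisely on $\p I$, so $\p I$ is a "non-characteristic-free" boundary and one must invoke the intrinsic Wright–Fisher theory (e.g. the results of Epstein–Mazzeo or the approach in \cite{Eth76} / \cite{EK09}) guaranteeing that the semigroup preserves smoothness up to the boundary and that the derivative estimates close. A cleaner alternative, which I would use if the direct PDE route gets stuck on boundary regularity, is a probabilistic/coupling derivation: differentiate under the expectation $P_t f(x) = \EE_x[f(X(t))]$ using the first-variation process $\p_{x}X(t)$, which solves a linear SDE whose drift is governed by $\nabla b$ and whose diffusion correction contributes the $\frac{m(m-1)}{4N}$ terms, and bound its $m$-th order analogue by a matrix exponential with exponent $-\lz_{k_1,\ldots,k_m}$; this sidesteps boundary regularity at the cost of a more involved SDE computation.
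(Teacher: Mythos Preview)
Your overall strategy---differentiate the backward equation and apply a maximum principle---is exactly what the paper does, but you have misidentified the structure of the differentiated equation, and this is a genuine gap. Under Assumption~\ref{ass}(ii) one has $\partial_{x_{k}} b_i = u_{k i} - u_{ri} = 0$ for every $i \neq k$, so differentiating the drift term produces \emph{no} coupling to other partials of the same order; and a short computation gives $\sum_{i,j}(\partial_{x_{k}} a_{ij})\,\partial_i\partial_j = \sum_i(\delta_{k i} - 2x_i)\,\partial_i$, which is a first-order operator acting on $\partial_{x_{k}}u$ alone. Iterating, $v = \partial^m u/\partial x_{k_1}\cdots\partial x_{k_m}$ satisfies the \emph{closed} scalar equation
\[
\partial_t v \;=\; \mathcal{L}_m v \;-\; \lambda_{k_1,\ldots,k_m}\, v,
\]
where $\mathcal{L}_m$ is $\mathcal{L}$ with modified first-order coefficients, and there are no lower-order source terms whatsoever. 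Your proposed induction with such sources would in fact fail to give \eqref{semi}: since $\lambda_{k_1,\ldots,k_{m-1}} < \lambda_{k_1,\ldots,k_m}$, a forcing term decaying like $e^{-\lambda_{m-1}t}$ in the inequality $\phi' \le -\lambda_m \phi + C e^{-\lambda_{m-1}t}$ yields only the slower rate $e^{-\lambda_{m-1}t}$ for $\phi$, not the claimed $e^{-\lambda_m t}$. The role of Assumption~\ref{ass} is thus not merely to compute the diagonal constant but to kill every off-diagonal term.

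With the closed equation in hand, the paper's route is also tidier than your Dini-derivative sketch: set $\psi(t,x) = e^{\lambda_{k_1,\ldots,k_m}t}\, v(t,x)$, so that $\partial_t \psi = \mathcal{L}_m \psi$, and invoke the weak maximum principle for degenerate parabolic equations on $\RR_+\times I$ directly (the paper cites \cite{Fri75,L96}), which handles the boundary without any ad hoc analysis of extrema on $\partial I$. Your regularity concern is legitimate; for $m=3,4$ the expression $\mathcal{L}_m v$ formally requires more derivatives than $f\in\mathcal{C}_b^4$ supplies, and the paper deals with this by mollifying $P_t f$ against a Gaussian kernel, proving the estimate for the mollification, and sending the smoothing parameter to zero.
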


\prf Given any $f \in \mathcal{C}^{4}(I)$, we define function $F$ on $\RR_+ \times I$ as $F(t, x) := P_{t} f(x)$. According to \cite{Eth76}, $P_t$ constitutes a strongly continuous semigroup, $F$ is differentiable with respect to $t$. Equation (\ref{power}) implies that $F (t, \cdot) \in \mathcal{C}^{4}(I)$ for each $t \geq 0$. Furthermore,  using Chapman-Kolmogorov equation and (\ref{opde}), $F$ satisfies the Kolmogorov backward equation, i.e.,
$$
\frac{\partial}{\partial t} F (t, x) = \L F(t,x), \qqd F(0, x) = f(x).
$$
For each $k_1 \in \{1, 2, \ldots, r-1\}$, denote by $F_{k_1}^{(1)} (t, x) = \dps\frac{\partial}{\partial x_{k_1}} F(t, x)$ the partial derivative of $F(t, x)$ with respect to $x_{k_1}$. By the definition of $a_{ij} (x)$ and $b_i (x)$ given in (\ref{arr}) and (\ref{mul}), we have
$$
\frac{\p}{\p x_{k_1}} a_{ij} (x) =
\left\{
  \begin{array}{ll}
    1- 2 x_{k_1}, & \hbox{$i= j= k_1$,} \\
    \dz_{k_1j} - x_j, & \hbox{$i = k_1$, $j \neq k_1$,} \\
    - x_i, & \hbox{$i \neq k_1$, $j = k_1$,} \\
    0, & \hbox{$i \neq k_1$, $j \neq k_1$,}
  \end{array}
\right.
$$
and
$$
\frac{\p}{\p x_{k_1}} b_i(x) =
\left\{
  \begin{array}{ll}
    - (\sum_{j= 1}^{r} u_{k_1j}) - u_{rk_1}, & \hbox{$i = k_1$,} \\
    u_{k_1i} - u_{ri}, & \hbox{$i \neq k_1$.}
  \end{array}
\right.
$$
Hence, using the Assumption \ref{ass}, it holds that
\be\label{first}
\frac{\partial}{\partial t} F_{k_1}^{(1)} (t, x) = \L_{1} F_{k_1}^{(1)} (t, x) - \lt( \sum_{j=1}^{r} u_{k_1j} + u_{rk_1} \rt) F^{(1)}_{k} (t, x),
\de
where
$$
\L_{1} = \L + \frac{1}{4N} \sum_{i=1}^{r-1} \lt( \delta_{k_1i} - 2x_{i} \rt) \frac{\partial}{\partial x_{i}}.
$$
As per  \cite{Eth76}, it follows that the first derivative of  $F_{k_1}^{(1)}$ with respect to  $t \in \RR_+$ and the second derivative concerning $x \in I$ are continuous. Considering all $t \geq 0$ and $x \in I$, let's denote
$$
\qz^{(1)}_{k_1} (t, x) = F_{k_1}^{(1)} (t, x) \exp\lt[ \lt( \sum_{j=1}^{r} u_{k_1j} + u_{rk_1} \rt) t\rt].
$$
Then, equation (\ref{first}) implies that
$$
\frac{\partial}{\partial t} \qz^{(1)}_{k_1} (t, x) = \L_{1} \qz^{(1)}_{k_1} (t, x), \qqd \text{where $\qz_{k_1}^{(1)} (0, x) = \frac{\p}{\p x_{k_1}} f(x) =: f_{k_1}^{(1)} (x)$}.
$$
Therefore, using the weak maximum principle for parabolic equations (cf. \cite[Theorem 2.4]{L96} or \cite[Theorem 6.3.1]{Fri75}), we obtain that $\Big| \dps \sup_{(t, x) \in \RR_+ \times I} \psi^{(1)}_{k_1} (t, x) \Big| \leq \Big| \dps \sup_{x \in I} f_{k_1}^{(1)} \Big|$. Hence, it holds that
$$
\lt| \sup_{x \in I} \frac{\p}{\p x_{k_1}} (P_t f) (x) \rt| \leq \lt| \sup_{x \in I}  f_{k_1}^{(1)} (x) \rt| \exp{ \bigg[ -t \bigg( \sum_{j=1}^{r} u_{k_1j} + u_{rk_1} \bigg) \bigg]}, \qd \forall t \geq 0,
$$
which implies that (\ref{semi}) holds for $m=1$.

For each $k_1, k_2 \in \{1, 2, \ldots, r-1\}$, denote by $\dps\frac{\partial^{2}}{\partial x_{k_2} \partial x_{k_1}} F(t, x) = F^{(2)}_{k_1, k_2} (t,x)$. Since $F (t, \cdot) \in \mathcal{C}^{4}(I)$, we differentiate both sides of (\ref{first}) with respect to $x_{k_2}$ and then
\be\lb{second}
\frac{\partial}{\partial t} F^{(2)}_{k_1, k_2} (t, x) = \L_{2} F^{(2)}_{k_1, k_2} (t, x) - \lt[ \sum_{j=1}^r (u_{k_1j} + u_{k_2j}) + u_{rk_2} + u_{rk_1} + \frac{1}{2N} \rt] F^{(2)}_{k_1, k_2} (t, x),
\de
where
$$
\L_{2} = \L + \frac{1}{4N} \sum_{i=1}^{r-1} \lt( \dz_{k_1i} + \dz_{k_2i} - 4 x_i \rt) \frac{\p}{\p x_{i}}.
$$
Denote by $\psi^{(2)}_{k_1, k_2} (t, x) = F^{(2)}_{k_1, k_2} (t, x) \exp\lt[ \lt(\sum_{j=1}^{r} (u_{k_1j} + u_{k_2j}) + u_{rk_2} + u_{rk_1} + \dps\frac{1}{2N}\rt) t \rt]$, and then we have
$$
\frac{\partial}{\partial t} \qz^{(2)}_{k_1, k_2} (t, x) = \L_{2} \qz^{(2)}_{k_1, k_2} (t, x), \qqd \text{where $\qz_{k_1, k_2}^{(2)} (0, x) = \frac{\p^2}{\p x_{k_1} \p x_{k_2}} f(x) =: f_{k_1, k_2}^{(2)} (x)$}.
$$
By the same argument as $F^{(1)}_{k_1} (t, x)$, using the weak maximum principle, then we have
$$
\Big| \dps \sup_{(t, x) \in \RR_+ \times I} \psi^{(2)}_{k_1, k_2} (t, x) \Big| \leq \Big| \dps \sup_{x \in I} f_{k_1, k_2}^{(2)} (x) \Big|,
$$
which means that
\begin{align*}
&\lt| \sup_{x \in I} \frac{\p^2}{\p x_{k_2} \p x_{k_1}} P_t f (x) \rt| \\
\leq& \lt| \sup_{x \in I} \frac{\p^2}{\p x_{k_2} \p x_{k_1}} f (x) \rt| \exp{ \bigg[ -t \bigg( \sum_{j=1}^{r} (u_{k_1j} + u_{k_2j}) + u_{rk_2} + u_{rk_1} + \frac{1}{2N} \bigg) \bigg]}, \qd \forall t \geq 0.
\end{align*}
Hence, we obtain (\ref{semi}) in the case $m=2$.

Next, we consider the case of $i=3, 4$. Since $f \in \mathcal{C}^{4} (I)$, it holds that $P _{t} f \in \mathcal{C}^{4} (I)$ and $\dps\frac{\p^3}{\p x_{k_3} \p x_{k_2} \p x_{k_1}} P_t f \in \mathcal{C}^{(1)}(I)$. We will employ a standard polishing technique aided by the normal distribution (see, e.g., \cite[Section 7]{FSX19}). Denote
\begin{align*}
(P_{t} f)^{\epsilon} (x) = \int_{\mathbb{R}^{r-1}} a_{\epsilon} (y) (P_{t} f) (x-y) \d y \qd \text{and} \qd f^{\epsilon} (x) = \int_{\mathbb{R}^{r-1}} a_{\epsilon} (y) f (x-y) \d y,
\end{align*}
where $\epsilon>0$ and $a_{\epsilon}$ is the density function of the normal distribution $N (0, \epsilon^{2} E_{r-1})$ ($E_{r-1}$ is the $(r-1)$-dimensional identity matrix). It can be verified that $( P_{t} f)^{\epsilon}$ is smooth and
$$
\lim_{\epsilon \to 0} (P_{t} f)^{\epsilon} (x) = P_{t} f (x) \qd \text{and} \qd \Big| \sup_{x \in I} (P_{t} f)^{\epsilon} (x)\Big| \leq \Big| \sup_{x \in I} P_t f (x) \Big|, \qd \forall f \in \mathcal{C}^{4} (I).
$$
Denote
\begin{align*}
&F^{\epsilon} (t, x) = (P_{t} f)^{\epsilon} (x), \quad F^{\epsilon, (1)}_{k_1} (t, x) = \dps\frac{\p}{\p x_{k_1}} F^{\epsilon} (t, x),\\
&F^{\epsilon, (2)}_{k_1, k_2} (t, x) = \dps\frac{\p^2}{\p x_{k_2} \p x_{k_1}} F^{\epsilon} (t, x).
\end{align*}
By these definitions, it can be shown that (\ref{first}) and (\ref{second}) hold for $F^{\epsilon, (1)}_{k_1} (t, x)$ and $F^{\epsilon, (2)}_{k_1, k_2}$.

For each $k_1, k_2, k_3 \in \{1, 2, \ldots, r-1\}$, we take the derivative of $F^{\epsilon, (2)}_{k_1, k_2}$ in (\ref{second}) with respect to $x_{k_3}$ and write $F^{\epsilon, (3)}_{k_1, k_2, k_3} := \dps\frac{\p^3}{\p x_{k_3} \p x_{k_2} \p x_{k_1}} F^{\epsilon} (t, x)$. Hence,
\begin{align*}
\frac{\p}{\p t} F_{k_1, k_2, k_3}^{\epsilon, (3)} (t, x) = \L_{3} F_{k_1, k_2, k_3}^{\epsilon, (3)} (t, x) - \lt[ \sum_{j=1}^{r} \sum_{n=1}^3 u_{k_n j} + \sum_{n=1}^3 u_{r k_n} + \frac{3}{2N} \rt] F_{k_1, k_2, k_3}^{\epsilon, (3)} (t, x),
\end{align*}
where
$$
\L_3 = \L + \frac{1}{4N} \sum_{i=1}^{r-1} \lt( \dz_{k_1 i} + \dz_{k_2 i} + \dz_{k_3 i} - 6 x_i \rt) \frac{\p}{\p x_i}.
$$
Denote by
$$
\qz_{k_1, k_2, k_3}^{\epsilon, (3)} (t, x) = F_{k_1, k_2, k_3}^{\epsilon, (3)} (t, x) \exp \bigg[ t \bigg( \sum_{j=1}^{r} \sum_{n=1}^3 u_{k_n j} + \sum_{n=1}^3 u_{r k_n} + \frac{3}{2N} \bigg) \bigg],
$$
and then (\ref{third}) implies that
\begin{align*}
\frac{\p}{\p t} \qz_{k_1, k_2, k_3}^{\epsilon, (3)} (t, x) = \L_{3} \psi_{k_1, k_2, k_3}^{\epsilon, (3)} (t, x),
\end{align*}
where $\qz_{k_1, k_2, k_3}^{\epsilon, (3)} (0, x) = \frac{\p^3}{\p x_{k_3} \p x_{k_2} \p x_{k_1}} f^{\epsilon} (x) =: f_{k_1, k_2, k_3}^{\epsilon, (3)} (x)$.
Again, the weak maximum principle ensures that $\Big| \dps\sup_{(t, x) \in \RR_+ \times I} \psi_{k_1, k_2, k_3}^{\epsilon, (3)} (t, x) \Big| \leq \Big| \sup_{x \in I} f_{k_1, k_2, k_3}^{\epsilon, (3)}(x) \Big|$, which means that
\begin{align*}
&\lt| \sup_{x \in I} \frac{\p^3}{\p x_{k_3} \p x_{k_2} \p x_{k_1}} (P_t f)^{\epsilon} (x) \rt|\\
\leq& \lt| \sup_{x \in I} f_{k_1, k_2, k_3}^{\epsilon, (3)} (x) \rt| \exp \bigg[ - t \bigg( \sum_{j=1}^{r} \sum_{n=1}^3 u_{k_n j} + \sum_{n=1}^3 u_{r k_n} + \frac{3}{2N} \bigg) \bigg].
\end{align*}
Therefore, passing the limit as $\epsilon \to 0$, we obtain (\ref{semi}) of $m=3$.

In exactly the same way, for each $k_1, k_2, k_3, k_4 \in \{1, 2, \ldots, r-1\}$, we have
\begin{align*}
\frac{\p}{\p t} F_{k_1, \ldots, k_4}^{\epsilon, (4)} (t, x)
=& \L_{4} F_{k_1, \ldots, k_4}^{\epsilon, (4)} (t, x) - \lt[ \sum_{j=1}^{r} \sum_{n=1}^4 u_{k_n j} + \sum_{n=1}^4 u_{r k_n} + \frac{3}{N} \rt] F_{k_1, \ldots, k_4}^{\epsilon, (4)} (t, x),
\end{align*}
where
$$
\L_4 = \L + \frac{1}{4N} \sum_{i=1}^{r-1} \lt( \dz_{k_1 i} + \dz_{k_2 i} + \dz_{k_3 i} + \dz_{k_4 i} - 8 x_i \rt) \frac{\p}{\p x_i}.
$$
Moreover, it holds that
\begin{align*}
&\lt| \sup_{x \in I} \frac{\p^4}{\p x_{k_1} \cdots \p x_{k_4}} (P_t f)^{\epsilon} (x) \rt|\\
\leq& \lt| \sup_{x \in I} f_{k_1, \ldots, k_4}^{\epsilon, (4)} (x) \rt| \exp \bigg[ - t \bigg( \sum_{j=1}^{r} \sum_{n=1}^4 u_{k_n j} + \sum_{n=1}^4 u_{r k_n} + \frac{3}{N} \bigg) \bigg].
\end{align*}
Finally, (\ref{semi}) holds by passing the limit as $\epsilon \to 0$.

In addition, when $r=2$, (\ref{2semi}) can be proved in the same way.
\deprf

Regarding the diffusion process $X (t)$, Lemma \ref{regular} indicates  that the higher-order derivative of the semi-group $P_t f$ can be bounded by the derivatives of $f$. As $Y (t)$ operates as a discrete time Markov chain, in order to characterize the effect of approximating $Y(t)$ with $X(t)$, we need to give the error estimation of these two processes with $t=1$.

\begin{lem}\label{compare}
Suppose that Assumption \ref{ass} holds and $r\geq2$. Let $(X (t))_{t \in \RR_+}$ be the diffusion process defined in (\ref{WF-CH}) and $(Y (n))_{n \in \NN}$ be the Markov chain defined in (\ref{WF-D-r}). Then, for any $f \in \mathcal{C}_{b}^{4} (I)$ and $y \in I_{2N} \subset I$,
\begin{itemize}
\item[(i)] when $r\geq3$, it holds that
$$
\lt| \EE \lt[f (X (1)) \big| X (0) = y \rt] - \EE \lt[ f (Y(1)) \big| Y(0) = y \rt] \rt| \leq \sum_{n=1}^4 C_n \lt( \sup_{y \in I} \lt\| \nabla^n f (y) \rt\|_2 \rt),
$$
where $C_n$ ($n=1, \ldots, 4$) are constants defined in Theorem \ref{main1};
\item[(ii)] when $r=2$, it holds that
$$
\lt| \EE \lt[f (X (1)) \big| X (0) = y \rt] - \EE \lt[ f (Y(1)) \big| Y(0) = y \rt] \rt| \leq \sum_{n=1}^4 \tilde{C}_n \lt( \sup_{y \in I} \lt\| \nabla^n f (y) \rt\|_2 \rt),
$$
where $\tilde{C}_n$ ($n=1, \ldots, 4$) are constants defined in Corollary \ref{cor}.
\end{itemize}
\end{lem}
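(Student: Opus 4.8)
\medskip\noindent\textbf{Proof proposal.} The plan is to reduce everything to a one-step estimate and to compare the two transition kernels through matched Taylor expansions. Fix $y\in I_{2N}$. Since $f(y)+\L f(y)$ is a ``leading part'' common to the diffusion semigroup at time $1$ and to one step of the chain, I would write
\[
\EE_y\lt[f(X(1))\rt]-\EE_y\lt[f(Y(1))\rt]=D_X-D_Y,
\]
where $D_X:=\EE_y[f(X(1))]-f(y)-\L f(y)$ and $D_Y:=\EE_y[f(Y(1))]-f(y)-\L f(y)$, and then bound $D_X$ and $D_Y$ separately, finally taking $C_n=C_n^X+C_n^Y$ (respectively the $\tilde C_n$ of Corollary \ref{cor} when $r=2$).

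For $D_X$ the idea is a Taylor expansion of the semigroup in the time variable. Since $f\in\C_b^4(I)$ gives $\L f\in\C_b^2(I)$ and $\L^2 f\in\C_b(I)$, applying $(\ref{power})$ with $n=1$ first to $f$ and then to $\L f$, together with Fubini, yields
\[
D_X=\int_0^1(1-s)\,P_s(\L^2 f)(y)\,\d s,\qquad\text{so that}\qquad |D_X|\le\tfrac12\sup_{x\in I}\bigl|\L^2 f(x)\bigr|,
\]
because $P_s$ is a sup-norm contraction on $\C_b(I)$ (as $X(s)\in I$). It then remains to expand $\L^2 f$ out of $(\ref{oper2})$ by the Leibniz rule and to group the terms according to the order $m\in\{1,2,3,4\}$ of the derivative of $f$ they contain. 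I expect the order-one term to be the delicate one: it arises only from $\langle b,\nabla\langle b,\nabla f\rangle\rangle$, and since Assumption \ref{ass}(ii) forces $\nabla b$ to be diagonal while $b$ is affine (so $\nabla^2 b\equiv 0$), it collapses to $-\sum_i b_i(y)\bigl(\sum_j u_{ij}+u_{ri}\bigr)\p_i f(y)$, bounded by $b^*u^*\sup_I\|\nabla f\|$, i.e. exactly $2C_1$ times that norm. For $m=2,3,4$ the terms carry the coefficients $a_{ij}$, $b_i$ and their first partial derivatives; bounding these pointwise over the simplex $I$ — this is where $b^*$, $u^*$ and the $\sqrt{r-2}/\sqrt{r-1}$-type constants enter, via Hilbert--Schmidt/operator-norm estimates for $A(x)$ and for the matrix of first derivatives of its entries — and using Cauchy--Schwarz over the free indices to form $\|\nabla^m f(x)\|_2$ gives $|D_X|\le\sum_{m=1}^4 C_m^X\sup_{x\in I}\|\nabla^m f(x)\|_2$.

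For $D_Y$ I would Taylor-expand $f$ spatially. Put $\Delta:=Y(1)-y$; since $I$ is convex, $y+t\Delta\in I$, so Taylor's formula with integral remainder gives $f(Y(1))=\sum_{m=0}^{3}\frac{1}{m!}\nabla^m f(y)[\Delta^{\otimes m}]+\frac{1}{6}\int_0^1(1-t)^3\nabla^4 f(y+t\Delta)[\Delta^{\otimes 4}]\,\d t$. Taking $\EE_y$ and inserting the multinomial moments $\EE_y[\Delta_i]=b_i(y)$ and $\EE_y[\Delta_i\Delta_j]=\frac{1}{2N}y_i^{\#}(\dz_{ij}-y_j^{\#})+b_i(y)b_j(y)$ (together with the standard $O(N)$ and $O(N^2)$ bounds on the third and fourth centred moments of a multinomial, and $|b_i(y)|\le u^*$): the first-order term $\langle b(y),\nabla f(y)\rangle$ cancels exactly against the drift part of $\L f(y)$, so $D_Y$ carries no $\nabla f$ contribution; the leftover second-order coefficient $\frac{1}{4N}\bigl[y_i^{\#}(\dz_{ij}-y_j^{\#})-a_{ij}(y)\bigr]+\tfrac12 b_i(y)b_j(y)$ on $\nabla^2 f$ is $O(u^*/N)+O((u^*)^2)$; the third-order moment $\EE_y[\Delta_i\Delta_j\Delta_k]$ splits as $O(N^{-2})+O(u^*/N)+O((u^*)^3)$ on $\nabla^3 f$; and a careful treatment of the fourth-order remainder — splitting $\Delta$ into its mean $b(y)$ and centred part, whose fourth moment is $O(N^{-2})$ uniformly in the mutation parameters — produces the $\nabla^4 f$ constant $C_4$, its cross-terms refining $C_2$ and $C_3$. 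Cauchy--Schwarz over the indices then yields $|D_Y|\le\sum_{m=2}^4 C_m^Y\sup_{x\in I}\|\nabla^m f(x)\|_2$, and matching the combined constants with those of Theorem \ref{main1} finishes (i). Part (ii) is the same computation with scalar $f$ and $\Delta$, $a(y)=y(1-y)$, $b(y)=u_{21}-(u_{12}+u_{21})y$; here the simplex bounds degenerate (no $\sqrt{r-2}$ factors), giving the sharper $\tilde C_n$ of Corollary \ref{cor}.

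The analytic inputs — $(\ref{power})$, contractivity of $P_s$, Taylor's theorem on the convex set $I$, and the multinomial moment formulas — are all routine, so the main obstacle is purely the bookkeeping: expanding $\L^2 f$ and the mixed moments of $\Delta$ without arithmetic slips, routing every term to the correct derivative order $m$, and extracting the \emph{sharp} pointwise bounds over $I$ for $a_{ij}$, $b_i$ and their first derivatives — in particular the Hilbert--Schmidt/operator-norm estimates responsible for the $\sqrt{r-2}/\sqrt{r-1}$-type factors — so that the resulting constants coincide with those stated in Theorem \ref{main1} and Corollary \ref{cor}.
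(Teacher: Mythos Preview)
Your plan is essentially the paper's own: Taylor-expand $P_1f$ in time to second order so that the diffusion remainder is controlled by $\tfrac12\sup_I|\L^2 f|$, expand $\L^2 f$ via Leibniz and route the pieces to $\|\nabla^m f\|_2$ for $m=1,\dots,4$, and Taylor-expand $f(Y(1))$ in space using the multinomial moments of $\Delta=Y(1)-y$; then compare. The analytic inputs you list are exactly the ones used.

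There is one structural difference worth flagging. The paper stops the spatial Taylor expansion of the chain at \emph{third} order, i.e.\ it writes $\EE_y[f(Y(1))]=f(y)+\langle b(y),\nabla f(y)\rangle+\tfrac12\langle\hat A(y),\nabla^2 f(y)\rangle_{\rm HS}+\mathcal R_3$ with $\mathcal R_3$ an integral remainder in $\nabla^3 f$, and then isolates the second-order mismatch $\tfrac{1}{4N}A(y)-\tfrac12\hat A(y)$ as a separate term; in particular the chain contributes nothing to $C_4$, which comes solely from the $\tfrac{1}{(4N)^2}\langle A,\nabla^2\langle A,\nabla^2 f\rangle\rangle$ piece of $\L^2 f$. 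Your route instead pushes the chain expansion to fourth order, so a multinomial fourth-moment term (of size $O(N^{-2})$) would enter your $C_4^Y$ and a genuine third-moment term $\tfrac16\nabla^3 f(y)[\EE_y\Delta^{\otimes 3}]$ replaces the paper's $\mathcal R_3$. Both schemes are valid and give constants of the same order, but if you want to land on the \emph{exact} $C_n$ stated in Theorem~\ref{main1} and Corollary~\ref{cor} you should mirror the paper's choice and stop at third order for the chain; otherwise you will obtain a correct inequality with slightly different (and for $C_4$ strictly larger) constants.
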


\prf (i) When $r\geq3$, we can prove the conclusion in three steps.
\newline
\underline{Step $1$.} We consider the diffusion process $X (t)$ defined in (\ref{WF-CH}) first. Using (\ref{power}), for all $f \in \mathcal{C}_{b}^{4} (\RR^{r-1})$ and $y \in I$, we obtain the Taylor expansion of $P_{t} f$ as
\begin{align}\lb{Taylor}
\EE \lt[ f (X(1)) \big| X(0) = y \rt] = P_1 f(y) = f(y) + \L f (y) + \frac{1}{2} P_{\kz} \lt( \L^2 f \rt) (y),
\end{align}
where $\L^2 f := \L (\L f)$ and $\kz \in (0, 1)$. According to (\ref{oper2}), we have
\begin{align}\lb{Taylor2}
\L^2 f(y) &= \frac{1}{4N} \langle A(y), \nabla^2 (\L f) (y) \rangle_{\rm HS} + \langle b(y), \nabla (\L f) (y) \rangle \notag \\
& \leq \frac{1}{4N} \lt( \sup_{y \in I} \| A(y) \|_{\rm HS} \rt) \cdot \lt\| \nabla^2 (\L f) (y) \rt\|_{\rm HS} + \lt( \sup_{y \in I} \| b(y) \| \rt) \cdot \lt\| \nabla (\L f) (y) \rt\|.
\end{align}
Referring to the expression for $A(y)$ provided in (\ref{arr}), we derive
$$
\sup_{y \in I} \| A(y) \|_{\rm HS} = \sup_{y \in I} \lt[ \sum_{i, j =1}^{r-1} \lt( y_i (\dz_{ij} - y_j) \rt)^2 \rt]^{1/2} = \frac{\sqrt{r-2}}{r-1},
$$
where the supremum is  achieved when  $y_i = (r-1)^{-1}$ for all $i \in \{1, \ldots, r-1\}$. Under Assumption \ref{ass}, it holds that $b_i (y) = - y_i \sum_{j=1}^r u_{ij} + u_{ri} - u_{ri} y_i$. Hence, we have
\begin{align*}
\sup_{y \in I} \| b (y) \| =& \sup_{y \in I} \lt[ \sum_{i=1}^{r-1} \lt( y_i \sum_{j=1}^r u_{ij} - u_{ri} + u_{ri} y_i \rt)^2 \rt]^{1/2}
 = \lt[ \lt( \sum_{j=1}^r u_{i^* j} \rt)^2 + \sum_{i \neq i^*} u_{ri}^2 \rt]^{1/2},
\end{align*}
where $i^* = \arg \max \lt( \sum_{j=1}^r u_{ij} + u_{ri}\rt)$ and the supremum is attained when $y_i = \dz_{i^*i} $ for all $i \in \{1, \ldots, r-1\}$. We denote by $b^*$ the constant of $\sup_{y \in I} \| b (y) \|$.

Subsequently, we proceed to compute  $\nabla^2 (\L f) (y)$ and $\nabla (\L f) (y)$. Let's start by considering  $\nabla (\L f) (y)$. Using (\ref{first}) in the proof of Lemma \ref{regular}, for each $k \in \{1, 2, \ldots, r-1 \}$, it holds that
\begin{align*}
\frac{\p}{\p y_k} (\L f) (y) & = \frac{1}{4N} \bigg\langle A(y), \nabla^2 \Big( \frac{\p}{\p y_k} f(y) \Big) \bigg\rangle_{\rm HS} + \bigg\langle b(y), \nabla \Big( \frac{\p}{\p y_k} f(y) \Big) \bigg\rangle \\
& \qd + \frac{1}{4N} \bigg\langle \dz_{k \bullet} - 2y, \nabla \Big( \frac{\p}{\p y_k} f(y) \Big) \bigg\rangle - \bigg( \sum_{j=1}^r u_{kj} + u_{rk} \bigg) \Big( \frac{\p}{\p y_k} f(y) \Big),
\end{align*}
where $\dz_{k \bullet} = (\dz_{k i})_{i \in \{1, \ldots r-1\}} \in \RR^{r-1}$. For each $\cz \in \RR^{r-1}$ satisfying $\| \cz\| \leq 1$, we have
$$
\langle \nabla (\L f) (y), \cz \rangle =: K_1 + K_2 + K_3 + K_4.
$$
The expressions and estimations for $K_m$ ($m=1, \ldots, 4$) are presented below:
\begin{align*}
K_1 & = \frac{1}{4N} \sum_{k=1}^{r-1} \cz_k \bigg( \sum_{i, j=1}^{r-1} a_{ij} (y) \frac{\p^2}{\p y_i \p y_j} \Big( \frac{\p}{\p y_k} f(y) \Big) \bigg)\\
 &\leq \frac{1}{4N} \lt( \sum_{i, j, k=1}^{r-1} \lt( \cz_k a_{ij} (y) \rt)^2 \rt)^{1/2} \lt( \sum_{i, j, k=1}^{r-1} \lt( \frac{\p^3}{\p y_k \p y_i \p y_j} f(y) \rt)^2 \rt)^{1/2} \\
& = \frac{1}{4N} \| \cz \| \cdot \| A (y)\|_{\rm HS} \lt\| \nabla^3 f(y) \rt\|_2, \\
K_2 & = \sum_{k=1}^{r-1} \cz_k \bigg( \sum_{i=1}^{r-1} b_i (y) \frac{\p^2}{\p y_k \p y_i} f (y) \bigg)
 = \lt\langle b(y) \cz^\top, \nabla^2 f (x) \rt\rangle_{\rm HS} \leq \| \cz\| \cdot \| b (y) \| \lt\| \nabla^2 f (y) \rt\|_{2}, \\
K_3 & = \frac{1}{4N} \sum_{k=1}^{r-1} \cz_k \bigg( \sum_{i=1}^{r-1} (\dz_{ki} - 2 y_i) \frac{\p^2}{\p y_i \p y_k} f (y) \bigg) = \frac{1}{4N} \lt\langle {\rm diag}(\cz) - 2 y \cz^\top, \nabla^2 f (y) \rt\rangle_{\rm HS} \\
& \leq \frac{1}{4N} \lt\| \nabla^2 f (y) \rt\|_{2} \cdot \| {\rm diag} (\cz) - 2 y \cz^\top \|_{\rm HS}\\
 &\leq \frac{1}{4N} \lt\| \nabla^2 f (y) \rt\|_{2} \sqrt{\| \cz \|^2 + 4 \| y\|^2 \| \cz\|^2  - 4 \langle y, \cz^2 \rangle }, \\
K_4 & = \sum_{k=1}^{r-1} (- \cz_k) \bigg( \sum_{j=1}^r u_{kj} + u_{rk} \bigg) \frac{\p}{\p y_k} f (y) \leq u^* \lt\| \nabla f (y) \rt\|_2,
\end{align*}
where $u^* := \dps\max_{1 \leq k \leq r-1} \bigg( \sum_{j=1}^r u_{kj} + u_{rk} \bigg)$. Combining the estimations of $K_m$ ($m=1, \ldots, 4$), we obtain
\begin{align}\lb{nabla1}
& \lt\| \nabla (\L f) (y) \rt\| = \sup_{\| \cz\| \leq 1} \langle \nabla (\L f) (y), \cz \rangle \notag \\
& \qd \leq \frac{\sqrt{r-2}}{4N (r-1)} \lt\| \nabla^3 f (y) \rt\|_2 + \lt( b^* + \frac{1}{4N} \sqrt{1+ 4 \| y\|^2} \rt) \lt\| \nabla^2 f (y) \rt\|_2 + u^* \lt\| \nabla f (y) \rt\|_2.
\end{align}

Now, we consider $\nabla^2 (\L f) (y)$. By (\ref{relation}), we have
\begin{align*}
\lt\| \nabla^2 (\L f) (y) \rt\|_{\rm HS} \leq& \sqrt{r-1} \lt\| \nabla^2 (\L f) (y) \rt\|_{\rm op}\\
 =& \sqrt{r-1} \sup_{\| \az \| \leq 1 \atop \| \bz \| \leq 1} \lt\{ \sum_{k, l =1}^{r-1} \bigg( \frac{\p^2}{\p y_k \p y_l} (\L f) (y) \bigg) \az_k \bz_l \rt\}.
\end{align*}
According to (\ref{second}) in Lemma \ref{regular}, it holds that
\begin{align*}
\frac{\p^2}{\p y_k \p y_l} (\L f) (y) & = \frac{1}{4N} \bigg\langle A(y), \nabla^2 \Big( \frac{\p^2}{\p y_k \p y_l} f (y) \Big) \bigg\rangle_{\rm HS} + \bigg\langle b(y), \nabla \Big( \frac{\p^2}{\p y_k \p y_l} f (y) \Big) \bigg\rangle \\
& \qd + \frac{1}{4N} \bigg\langle \dz_{k \bullet} + \dz_{l \bullet} - 4 y, \nabla \Big( \frac{\p^2}{\p y_k \p y_l} f (y) \Big) \bigg\rangle\\
 &\qd- \Big[ \sum_{j=1}^r (u_{kj} + u_{lj}) + u_{rk} + u_{rl} + \frac{1}{2N} \Big] \Big( \frac{\p^2}{\p y_k \p y_l} f (y) \Big).
\end{align*}
Hence, for each $\az, \bz \in \RR^{r-1}$ satisfying $\| \az \| \leq 1$ and $\| \bz \| \leq 1$, we have
$$
\sum_{k, l =1}^{r-1} \bigg( \frac{\p^2}{\p y_k \p y_l} (\L f) (y) \bigg) \az_k \bz_l =: M_1 + M_2 + M_3 + M_4,
$$
where the expressions and estimations for $M_m$ ($m=1, \ldots, 4$) are
\begin{align*}
M_1 &= \frac{1}{4N} \sum_{k, l=1}^{r-1} \az_k \bz_l \sum_{i, j=1}^{r-1} a_{ij} (y) \frac{\p^2}{\p y_j \p y_i} \Big( \frac{\p^2}{\p y_k \p y_l} f (y)\Big)\\
 &\leq \frac{1}{4N} \| \az \| \cdot \| \bz\| \cdot \| A(y)\|_{\rm HS} \lt\| \nabla^4 f (y) \rt\|_2, \\
M_2 &= \sum_{k, l =1}^{r-1} \az_k \bz_l \sum_{i=1}^{r-1} b_i (y) \frac{\p}{\p y_i} \Big( \frac{\p^2}{\p y_k \p y_l} f(y) \Big) \leq \| \az \| \cdot \| \bz\| \cdot \| b(y) \| \cdot \lt\| \nabla^3 f (y) \rt\|_2 \\
M_3 &= \frac{1}{4N} \sum_{k, l=1}^{r-1} \az_k \bz_l \sum_{i=1}^{r-1} (\dz_{ki} + \dz_{li} - 4 y_i) \frac{\p}{\p y_i} \Big(\frac{\p^2}{\p y_k \p y_l} f(y) \Big) \\
& \leq \frac{1}{4N} \lt\| \nabla^3 f (y)\rt\|_2 \lt[ \sum_{i, k, l =1}^{r-1} \lt( \az_k \bz_l \dz_{ki} + \az_k \bz_l \dz_{li} - 4 y_i \az_k \bz_l \rt)^2 \rt]^{1/2} \\
& \leq \frac{1}{4N} \lt\| \nabla^3 f (y)\rt\|_2\\
 &\qd\cdot\sqrt{2 (\| \az \| \cdot \| \bz \|)^2 + 16 (\| \az \| \cdot \| \bz \|)^2 \| y \|^2 - 8 \| \az \|^2 \lt\| \bz \sqrt{y} \rt\|^2 - 8 \| \bz \|^2 \lt\| \az \sqrt{y} \rt\|^2}, \\
M_4 &= \sum_{k, l=1}^{r-1} (- \az_k \bz_l) \bigg( \sum_{j=1}^{r} (u_{kj} + u_{lj}) + u_{rk} + u_{rl} + \frac{1}{2N} \bigg) \Big( \frac{\p^2}{\p y_k \p y_l} f(y) \Big) \\
& \leq \bigg( 2 u^* + \frac{1}{2N} \bigg) \, \| \az \| \cdot \| \bz\| \cdot \lt\| \nabla^2 f (y)\rt\|_2.
\end{align*}
Hence, we obtain the estimation of $\lt\| \nabla^2 (\L f) (y) \rt\|_{\rm HS}$ as
\begin{align}\lb{nabla2}
& \lt\| \nabla^2 (\L f) (y) \rt\|_{\rm HS} \leq \sqrt{r-1} \sup_{\| \az \| \leq 1 \atop \| \bz \| \leq 1} \lt\{ \sum_{k, l =1}^{r-1} \bigg( \frac{\p^2}{\p y_k \p y_l} (\L f) (y) \bigg) \az_k \bz_l \rt\} \notag \\
& \qd \leq \sqrt{r-1} \lt[ \frac{\sqrt{r-2}}{4N (r-1)} \lt\| \nabla^4 f (y) \rt\|_2 + \lt( b^* + \frac{1}{4N} \sqrt{2+ 16 \| y\|^2} \rt) \lt\| \nabla^3 f (y)\rt\|_2\right.\nonumber\\
&\left.\qd\qd + \lt(2 u^* + \frac{1}{2N} \rt) \lt\| \nabla^2 f(y) \rt\|_2 \rt].
\end{align}

By combining  (\ref{Taylor}), (\ref{Taylor2}), (\ref{nabla1}) and (\ref{nabla2}), we derive that
\begin{align}\lb{Taylor3}
&\EE \lt[ f (X_1) \big| X(0) = y \rt]\nonumber\\
  =& f(y) + \frac{1}{4N} \langle A(y), \nabla^2 f (y) \rangle_{\rm HS} + \langle b(y), \nabla f (y) \rangle + \frac{1}{2} P_{\kz} \lt( \L^2 f \rt) (y),
\end{align}
where $P_{\kz} \lt( \L^2 f \rt) (y)$ satisfies the following estimation
\begin{align*}
&P_{\kz} \lt( \L^2 f \rt) (y) \leq \lt| \L^2 f (y) \rt|\\
 &\qd \leq \frac{r-2}{16 N^2 (r-1)^{3/2}} \lt\| \nabla^4 f (y) \rt\|_2 + \frac{\sqrt{r-2}}{4N \sqrt{r-1}} \lt( b^* + \frac{b^*}{\sqrt{r-1}} + \frac{3 \sqrt{2}}{4N} \rt) \lt\| \nabla^3 f (y) \rt\|_2 \\
& \qd + \lt[ \frac{\sqrt{r-2}}{4N \sqrt{r-1}} \bigg( 2 u^* + \frac{1}{2N} \bigg) + b^* \bigg( b^* + \frac{\sqrt{5}}{4N} \bigg) \rt] \lt\| \nabla^2 f (y) \rt\|_2 + b^* u^* \lt\| \nabla f (y) \rt\|_2.
\end{align*}

\underline{Step $2$.} Next, we focus on the Markov chain $Y (n)$ defined in (\ref{WF-D-r}). Using the Taylor expansion again, for each $f \in \mathcal{C}_{b}^{4} (\RR^{r-1})$ and $y \in I_{2N}$, we have
\begin{align}\lb{Taylor4}
\EE_y \lt[ f(Y(1)) \rt] & := \EE \lt[ f(Y(1)) \big| Y(0) = y \rt] \notag \\
& = f (y) + \lt\langle \nabla f (y), \EE_y \lt[  Y(1) - y \rt] \rt\rangle \nonumber\\
 &\qd+ \frac{1}{2} \lt\langle \nabla^2 f(y), \EE \lt[ \lt(Y(1) - y \rt) \lt(Y(1) - y \rt)^\top \rt] \rt\rangle_{\rm HS} + \mathcal{R}_3,
\end{align}
where $\mathcal{R}_3$ is the remainder term of the Taylor expansion, which is given by
\begin{align*}
\mathcal{R}_3 := \frac{1}{2} \int_0^1 (1-t)^2 \sum_{i, j, k=1}^{r-1} \EE_y &\lt[ \Big(Y_i (1) - y_i \Big) \Big(Y_j (1) - y_j \Big)\right.\\
 &\left.\qd\cdot\Big(Y_k (1) - y_k \Big) \frac{\p^3}{\p y_k \p y_j \p y_i} f(y+ t (Y(1) - y)) \rt]\d t.
\end{align*}
Notice that when $Y(0) = (y_1, \ldots, y_{r-1})$, we have
$$
Y(1) \sim (2N)^{-1} { \rm multinomial } \lt( 2N, \ \big( y_1^{\#}, \ldots, y_{r-1}^{\#} \big) \rt),
$$
then for each $i \in \{1, 2, \ldots, r-1 \}$, we have
$$
\EE \lt[ Y_i (1) \big| Y (0) = y \rt] = y_i^{\#}, \qd \EE \lt[ Y^2_i (1) \big| Y (0) = y \rt] = \Big( 1 - \frac{1}{2N}\Big) \lt( y_i^{\#} \rt)^2 + \frac{1}{2N} y_i^{\#},
$$
\begin{align}\label{third}
&\mathbb{E}_{y}\left[Y_i(1) - y_i \right]^3\nonumber\\
=&\frac{y_{i}^{\#}}{(2N)^{2}}(1-2y_{i}^{\#})(1-y_{i}^{\#})+\frac{3y_{i}^{\#}}{2N}(1-y_{i}^{\#})(y_{i}^{\#}-y_{i})+(y_{i}^{\#}-y_{i})^{3}.
\end{align}
Furthermore, using (\ref{muta-r}) and (\ref{mul}), we obtain
\begin{align*}
& \EE \lt[ Y(1) - y \big| Y (0) = y \rt] = \lt( y^{\#}_i - y_i \rt)_{i \in \{1, \ldots, r-1 \}} = b (y), \\
& \EE \lt[ \lt(Y(1) - y \rt) \lt( Y(1) - y\rt)^\top \big| Y (0) = y \rt] = \lt( \hat{a}_{ij} (y)\rt)_{i, j \in \{1, \ldots, r-1 \}} =: \hat{A} (y),
\end{align*}
here $y^{\#}_i$  represents the adjusted proportion of the allele $A_i$ defined in (\ref{muta-r}), and the matrix $\hat{A} (y)$ is specified as
$$
\hat{a}_{ij} =
\left\{
  \begin{array}{ll}
    \lt( y_i^{\#} - y_i \rt) \lt( y_j^{\#} - y_j\rt) - \dps\frac{1}{2N} y_i^{\#} y_j^{\#}, & \hbox{$i \neq j$;} \\
    (y_i^{\#} - y_i)^2 + \dps\frac{1}{2N} y_i^{\#} (1 - y_i^{\#}), & \hbox{$i = j$.}
  \end{array}
\right.
$$
Hence, by (\ref{Taylor4}), we deduce
\be\lb{Taylor5}
\EE \lt[ f(Y(1)) \big| Y(0) = y \rt] = f (y) + \lt\langle \nabla f (y), b(y) \rt\rangle + \frac{1}{2} \lt\langle \nabla^2 f(y), \hat{A} (y) \rt\rangle_{\rm HS} + \mathcal{R}_3,
\de
where $\mathcal{R}_3$ is estimated as follows
$$
|\mathcal{R}_3| \leq \frac{1}{6} \lt( \sup_{y \in I_{2N}} \lt\| \nabla^3 f (y) \rt\|_2 \rt) \lt( \sup_{y \in I_{2N}} \EE_y \lt[ \| Y(1) - y \|^3 \rt] \rt).
$$
Using  the H$\ddot{o}$lder inequality and (\ref{third}), we have
\begin{align*}
\EE_y \lt[ \| Y(1) - y \|^3 \rt] & \leq \lt( \sqrt{r-1} \rt)  \EE_y \lt[ \sum_{i=1}^{r-1} ( Y_i(1) - y_i )^3 \rt] \\ &\leq(r-1)^{\frac{3}{2}}\left[\frac{1}{32N^{2}}+\frac{3u^{*}}{8N}+(u^{*})^{3}\right].
\end{align*}

\underline{Step $3$.} Referring to equations (\ref{Taylor3}) and (\ref{Taylor5}), we obtain that
\begin{align*}
\lt| \EE_y \lt(f (X_1) \rt) - \EE_y \lt( f (Y(1)) \rt) \rt|
=&\left| \langle \nabla^2 f (y), \frac{1}{4N} A (y) - \frac{1}{2} \hat{A} (y) \rangle_{\rm HS} + \frac{1}{2} P_{\kz} (\L^2 f) (y) - \mathcal{R}_3 \right|\\
\leq& \lt\| \frac{1}{4N} A (y) - \frac{1}{2} \hat{A} (y) \rt\|_{\rm HS} \lt\| \nabla^2 f (y) \rt\|_2 + \frac{1}{2} \lt| \L^2 f (y)\rt| + |\mathcal{R}_3|.
\end{align*}
Using the fact that $y^{\#}_i - y_i = b_i (y)$, it holds that
\begin{align*}
&\lt\| \frac{1}{4N} A (y) - \frac{1}{2} \hat{A} (y) \rt\|_{\rm HS}^2\\
=& \sum_{i,j=1}^{r-1}\left\{\frac{1}{4N}\left[y_{i}(\delta_{ij}-y_{j})-y_{i}^{\#}(\delta_{ij}-y_{j}^{\#})\right]-\frac{1}{2}(y_{i}^{\#}-y_{i})(y_{j}^{\#}-y_{j})\right\}^{2}\\
\leq&\sum_{i,j=1}^{r-1}\frac{1}{8N^{2}}\left[y_{i}(\delta_{ij}-y_{j})-y_{i}^{\#}(\delta_{ij}-y_{j}^{\#})\right]^{2}+\frac{1}{2}(y_{i}^{\#}-y_{i})^{2}(y_{j}^{\#}-y_{j})^{2}\\
\leq&\frac{(r-1)^{2}(u^{*})^{2}}{2}\left[\frac{1}{N^{2}}+(u^{*})^{2}\right].
\end{align*}
Therefore, by sorting out the estimations of $\lt| \L^2 f (y)\rt|$ and $\mathcal{R}_3$, we conclude that
\begin{align*}
&\lt| \EE_y \lt(f (X(1)) \rt) - \EE_y \lt( f (Y(1)) \rt) \rt|\\
\leq& C_4 \lt\| \nabla^4 f (y) \rt\|_2 + C_3 \lt\| \nabla^3 f (y) \rt\|_2 + C_2 \lt\| \nabla^2 f (y) \rt\|_2 + C_1 \lt\| \nabla f (y) \rt\|_2,
\end{align*}
where $C_m$ ($m=1, \ldots, 4$) are defined in Theorem \ref{main1}, and the claim (i) is proved.

(ii) When $r=2$, notice that for any $y\in I_{2N}$,
\begin{align*}
A(y)=y(1-y), \quad b(y)=-u_{12}x+u_{21}(1-x),
\end{align*}
we have
\begin{align*}
\sup_{y\in I}|A(y)|=\frac{1}{4}, \quad \sup_{y\in I}|b(y)|=\max\{u_{12},u_{21}\}.
\end{align*}
Then, by the same argument as the proof of \eqref{Taylor3}, we obtain
\begin{align*}
&\EE \lt[ f (X(1)) \big| X(0) = y \rt] \nonumber\\
=& f(y) + \frac{1}{4N}y(1-y)f^{(2)}(y)+[-u_{12}y+u_{21}(1-y)]f^{(1)}(y) + \frac{1}{2} P_{\kz} \lt( \L^2 f \rt) (y),
\end{align*}
where $P_{\kz} \lt( \L^2 f \rt) (y)$ satisfies the following estimation
\begin{align*}
&P_{\kz} \lt( \L^2 f \rt) (y) \leq \lt| \L^2 f (y) \rt| \\
& \leq \frac{1}{256N^2}\left|f^{(4)}(y)\right| + \frac{1}{8N} \lt(\frac{1}{4N}+\max\{u_{12},u_{21}\}\rt)\left|f^{(3)}(y)\right|\\
& \qd + \lt(\frac{1}{32N^{2}}+\frac{u_{12}+u_{21}}{8N}+\max\{u_{12}^{2},u_{21}^{2}\}+\frac{\max\{u_{12},u_{21}\}}{4N}\rt) \lt|f^{(2)}(y)\rt|\\ &\qd+\max\{u_{12},u_{21}\}(u_{12}+u_{21})\lt|f^{(1)}(y)\rt|.
\end{align*}
Moreover, employing the same logic as used in proving \eqref{Taylor5}, we arrive at
\begin{align*}
\EE \lt[ f(Y(1)) \big| Y(0) = y \rt] =& f (y) + [-u_{12}y+u_{21}(1-y)]f^{(1)}(y)\nonumber\\
 &+ \frac{1}{2}\left[(y^{\#}-y)^{2}+\frac{1}{2N}y^{\#}(1-y^{\#})\right]f^{(2)}(y)+ \mathcal{R}_3,
\end{align*}
where $y^{\#}=(1-u_{12})y+u_{21}(1-y)$ and $\mathcal{R}_3$ has the following estimation
\begin{align*}
|\mathcal{R}_3|
\leq& \frac{1}{6} \lt( \sup_{y \in I_{2N}} \lt|f^{(3)}(y) \rt| \rt) \lt( \sup_{y \in I_{2N}} \EE_y \lt[ \| Y(1) - y \|^3 \rt] \rt)\\
=& \frac{1}{6} \lt( \sup_{y \in I_{2N}} \lt|f^{(3)}(y) \rt| \rt)\lt( \sup_{y \in I_{2N}}\left[\frac{y^{\#}}{(2N)^{2}}(1-2y^{\#})(1-y^{\#})+[-u_{12}y^{\#}+u_{21}(1-y^{\#})]^{3}\right.\right.\\
&\left.\left.\qquad\qquad\qquad\qquad\qquad\qquad+\frac{3y^{\#}}{2N}(1-y^{\#})[-u_{12}y^{\#}+u_{21}(1-y^{\#})]\right] \rt)\\
\leq&\left(\frac{1}{192N^{2}}+\frac{\max\{u_{12},u_{21}\}}{16N}+\frac{\max\{u_{12}^{3},u_{21}^{3}\}}{6}\right)\lt( \sup_{y \in I_{2N}} \lt|f^{(3)}(y) \rt| \rt).
\end{align*}
Hence, we have
\begin{align*}
&\lt| \EE_y \lt(f (X(1)) \rt) - \EE_y \lt( f (Y(1)) \rt) \rt|\\
=&\left| \frac{1}{2}\left[\frac{1}{2N}y(1-y)-(y^{\#}-y)^{2}-\frac{1}{2N}y^{\#}(1-y^{\#})\right]f^{(2)}(y) + \frac{1}{2} P_{\kz} (\L^2 f) (y) - \mathcal{R}_3 \right|\\
\leq&\frac{1}{2}\lt| \frac{1}{2N}y(1-y)-(y^{\#}-y)^{2}-\frac{1}{2N}y^{\#}(1-y^{\#}) \rt| \lt|f^{(2)}(y) \rt| + \frac{1}{2} \lt| \L^2 f (y)\rt| + |\mathcal{R}_3|.
\end{align*}
Using the fact that $y^{\#}-y=-u_{12}y+u_{21}(1-y)$, it holds that
\begin{align*}
\lt| \frac{1}{2N}y(1-y)-(y^{\#}-y)^{2}-\frac{1}{2N}y^{\#}(1-y^{\#}) \rt|\leq\frac{\max\{u_{12},u_{21}\}}{N}+\max\{u_{12}^{2},u_{21}^{2}\}.
\end{align*}
Therefore, upon organizing the estimates of $\lt| \L^2 f (y)\rt|$ and $\mathcal{R}_3$, we derive
\begin{align*}
\lt| \EE_y \lt(f (X(1)) \rt) - \EE_y \lt( f (Y(1)) \rt) \rt|
\leq& \tilde{C}_4 \lt|f^{(4)}(y)\rt| + \tilde{C}_3 \lt|f^{(3)}(y)\rt| + \tilde{C}_2 \lt|f^{(2)}(y)\rt| +\tilde{C}_1 \lt|f^{(1)}(y)\rt|,
\end{align*}
where $\tilde{C}_m$ ($m=1, \ldots, 4$) are defined in Corollary \ref{cor}, and the claim (ii) holds.
\deprf

\section{Proofs of Theorem \ref{main1} and Corollary \ref{cor}}\label{proof}

In this section, we first give the proof of the main theorem.

\subsection{Proof of Theorem \ref{main1}.} For clarity in the proof of this theorem, we'll use $X_t$ and $Y_n$ instead of denoting them as $X (t)$ and $Y (n)$. Given any $z \in I_{2N} \subset I$ and $k \in \mathbb{Z}^{+}$, let $\lt( \hat{X}_{t} (k, z) \rt)_{t \geq k}$ be the solution to the stochastic differential equation (\ref{WF-CH}) satisfying $\hat{X}_k (k, z) = z$. According to the Markov property, the semi-group of $X_t$ is consistent with the one of $\hat{X}_{t} (k, z)$. By this definition, the processes $X_t$ and $\hat{X}_{t}$ satisfy the following two relationships
\be\lb{relat1}
X_t \overset{\d}{=} \hat{X}_{t} (0, X_0) \overset{\d}{=} \hat{X}_{t} (k, X_{k}) \qd \text{and} \qd \hat{X}_t (k, z) \overset{\d}{=} \hat{X}_{t-k} (0, z), \qd \forall t \geq k, \  z \in I_{2N}.
\de
We assume that $X_0 = Y_0 = x$ for some $x \in I_{2N}$, then we have $\hat{X}_1 (0, X_0) \overset{\d}{=} \hat{X}_1 (0, Y_0)$. Hence, for $n \geq 1$, it holds that
$$
\EE_x \lt[ f (X_{n}) \rt] = \EE_x \lt[ f \Big( \hat{X}_{n} \big(1, \hat{X}_{1} (0, Y_0) \big) \Big) \rt] - \EE_x \lt[ f \big( \hat{X}_{n} (1, Y_{1}) \big) \rt] + \EE_x \lt[ f \big( \hat{X}_{n} (1, Y_{1}) \big) \rt].
$$
For $n \geq 2$, equation (\ref{relat1}) implies that $\hat{X}_{n} (1, y_1) \overset{\d}{=} \hat{X}_{n} \big( 2, \hat{X}_{2} (1, y_1) \big)$ for each $y_1 \in I_{2N}$. Now, for each  $n \geq 0$ and $x, y \in I_{2N}$,  let  $P_{Y} (x, y) := \PP (Y_{n+1} = y | Y_{n} = x)$ denote the one-step homogeneous transition probability matrix of  $Y_n$, as given in (\ref{WF-D}). Employing the Markov property once more, we obtain
\begin{align*}
& \EE_x \lt[ f \lt( \hat{X}_{n} \big( 1, Y_{1} \big) \rt) \rt] = \sum_{y_1 \in I_{2N}} P_Y (x, y_1) \EE_x \lt[ f \Big( \hat{X}_{n} \big( 1, y_1 \big) \Big) \rt]\\
 =& \sum_{y_1 \in I_{2N}} P_Y (x, y_1) \EE_{y_1} \lt[ f \Big( \hat{X}_{n} \big( 2, \hat{X}_2 (1, y_1) \big) \Big) \rt] \\
=& \sum_{y_1 \in I_{2N}} P_Y (x, y_1) \lt\{\EE_{y_1} \lt[ f \Big( \hat{X}_{n} \big( 2, \hat{X}_2 (1, y_1) \big) \Big) \rt] - \EE_{y_1} \lt[ f \Big( \hat{X}_{n} \big( 2, Y_2 \big) \Big) \rt]\rt\}\\
& + \sum_{y_1 \in I_{2N}} P_Y (x, y_1) \EE_{y_1} \lt[ f \Big( \hat{X}_{n} \big( 2, Y_2 \big) \Big) \rt].
\end{align*}
Therefore, by mathematical induction, we obtain that
\begin{align}\lb{thm-1-1}
\EE_x \lt[ f (X_{n}) \rt] & = \EE_x \lt[ f \Big( \hat{X}_n \big( 1, \hat{X}_1 (0, Y_0) \big) \Big) \rt] - \EE_x \lt[ f \Big( \hat{X}_n (1, Y_1) \Big) \rt] \notag \\
& \qd + \sum_{j=1}^{n-1} \lt\{ \sum_{y_j \in I_{2N}} P_Y^j (x, y_j) \lt[ \EE_{y_j} \lt[ f \Big( \hat{X}_n \big( j+1, \hat{X}_{j+1} (j, y_j) \big) \Big) \rt]\right.\right.\nonumber\\
 &\left.\left.\qquad\qquad\qquad\qquad\qquad\qquad- \EE_{y_j} \lt[ f \Big( \hat{X}_n (j+1, Y_{j+1}) \Big) \rt] \rt] \rt\} \notag \\
& \qd + \sum_{y_{n-1} \in I_{2N}} P_Y^{n-1} (x, y_{n-1}) \EE_{y_{n-1}} \lt[ f \Big( \hat{X}_n (n, Y_n) \Big) \rt].
\end{align}
By (\ref{relat1}), we have $X_n (n, Y_n) \overset{\d}{=} Y_n$, which implies that
$$
\sum_{y_{n-1}} P_Y^{n-1} (x, y_{n-1}) \EE_{y_{n-1}} \lt[ f \Big( \hat{X}_n (n, Y_n) \Big) \rt] = \EE_x \lt[ f (Y_n)\rt].
$$
With the supplementary definition of $P_Y^0 (x, y_0) := \dz_{x} (y_0)$, we establish that (\ref{thm-1-1}) is equivalent to following
\begin{align}\lb{thm-1-2}
\EE_x \lt[ f (X_{n}) \rt] - \EE_x \lt[ f( Y_{n}) \rt]
=& \sum_{j=1}^{n} \lt\{ \sum_{y_{j-1} \in I_{2N}} P_Y^{j-1} (x, y_{j-1}) \left[ \EE_{y_{j-1}} \lt[ f \Big( \hat{X}_n \big( j, \hat{X}_{j} (j-1, y_{j-1}) \big) \Big) \rt]\right.\right.\nonumber \\
& \left.\left.\qquad\qquad\qquad\qquad\qquad\qquad-\EE_{y_{j-1}} \lt[ f \Big( \hat{X}_n (j, Y_{j}) \Big) \rt] \right] \rt\}.
\end{align}

Next, we give the bounds of the right-hand side of (\ref{thm-1-2}). Similar to the proof of Lemma \ref{regular}, we denote
$$
F_t (x) := \EE_x \lt[ f(X_t)\rt] = P_t f (x), \qd \text{for all $t \geq 0$, $f \in \mathcal{C}_b^4 (I)$ and $x \in I_{2N} \subset I$. }
$$
In the above definition, the $P_t$ is the semi-group of process $X_t$. According to \cite[Theorem 1]{Eth76}, it holds that $F_t \in \mathcal{C}_b^4 (I)$ for all $t \geq 0$. By the Markov property of $X_t$, for each $0 \leq j \leq n$, we obtain
$$
F_{n - j} (x) = \EE_x \lt[ f (X_{n-j}) \rt] = \EE \lt[ f \big( \hat{X}_n (j, x) \big) \big| \hat{X}_j (j, x) = x \rt], \qd \forall x \in I_{2N}.
$$
Hence, we have
\begin{align*}
&\EE_{y_{j-1}} \lt[ f \Big( \hat{X}_n \big( j, \hat{X}_{j} (j-1, y_{j-1}) \big) \Big) \rt] - \EE_{y_{j-1}} \lt[ f \Big( \hat{X}_n (j, Y_{j}) \Big) \rt]\\
=& \EE_{y_{j-1}} \lt[ F_{n-j} \lt( \hat{X}_1 (0, y_{j-1}) \rt) \rt] - \EE_{y_{j-1}} \lt[ F_{n-j} \lt( Y_{j} \rt) \rt].
\end{align*}
As per  Lemma \ref{compare} (i), we deduce
\begin{align}\lb{thm-1-3}
\lt| \EE_{y_{j-1}} \lt[ F_{n-j} \lt( \hat{X}_1 (0, y_{j-1}) \rt) \rt] - \EE_{y_{j-1}} \lt[ F_{n-j} \lt( Y_{j} \rt) \rt] \rt|
\leq& \sum_{m=1}^4 C_m \lt( \sup_{y \in I} \lt\| \nabla^m F_{n-j} (y) \rt\|_2 \rt),
\end{align}
and each item of (\ref{thm-1-3}) can be estimated based on Lemma \ref{regular}, i.e., for $m \in \{1, \ldots, 4\}$
\begin{align}\lb{thm-1-4}
\sup_{y \in I} \lt\| \nabla^m F_{n-j} (y) \rt\|_2 =& \sup_{y \in I} \Big\| \nabla^m P_{n-j} f (y) \Big\|_2 \nonumber\\
\leq& \Big\| \sup_{y \in I} \nabla^m f (y) \Big\|_2 \exp \lt( - (n-j) \min_{1 \leq k_1, \ldots, k_m \leq r-1} \lz_{k_1, \ldots, k_m} \rt).
\end{align}
Hence, combining (\ref{thm-1-2}), (\ref{thm-1-3}) and (\ref{thm-1-4}), we arrive at
\begin{align*}
& \lt| \EE_x \lt[ f (X_{n}) \rt] - \EE_x \lt[ f( Y_{n}) \rt] \rt|\\
\leq& \sum_{j=1}^n \lt\{ \sum_{y_{j-1} \in I_{2N}} P^{j-1}_Y (x, y_{j-1}) \lt[ \sum_{m=1}^{4} C_m \lt( \sup_{y \in I} \lt\| \nabla^m F_{n-j} (y) \rt\|_2 \rt) \rt] \rt\} \\
\leq& \sum_{j=1}^n \sum_{m=1}^4 C_m \Big\| \sup_{y \in I} \nabla^m f (y) \Big\|_2 \exp \lt( - (n-j) \min_{1 \leq k_1, \ldots, k_m \leq r-1} \lz_{k_1, \ldots, k_m} \rt)  \\
\leq& \sum_{m=1}^4 C_m \Big\| \sup_{y \in I} \nabla^m f (y) \Big\|_2 \lt[ \frac{1 - \exp \lt(-n \min_{1 \leq k_1, \ldots, k_m \leq r-1} \lz_{k_1, \ldots, k_m} \rt)}{1- \exp \lt( - \min_{1 \leq k_1, \ldots, k_m \leq r-1} \lz_{k_1, \ldots, k_m} \rt)} \rt].
\end{align*}
Hence, the proof is completed. \deprf

\subsection{Proof of Corollary \ref{cor}.}
By using (\ref{2semi}) and referring to Lemma \ref{compare} (ii), applying a similar argument as employed in proving Theorem \ref{main1}, we arrive at the desired result.
\deprf

\begin{appendix}

\section{the existence and uniqueness of the solution}\label{AppendixA}

\subsection{The strong solution when $r=2$}

When $r=2$, we can prove the uniqueness of the strong solution. In the following, we present a version of the Yamada-Watanabe criterion cited from \cite[Theorem 1]{YW71}.
\begin{thm}\label{YaWa}
Let
\begin{align}\label{ASDE}
dX_t = b(X_t)dt + \sigma(X_t)dB_t.
\end{align}
Assume that
\begin{enumerate}
\item[(1)]
there exists a positive increasing function $\rho(u)$, $u \in (0,\infty)$ such that
\[
|\sigma(x) - \sigma(y)| \le \rho(|x-y|), \quad \forall x,y \in \mathbb{R}
\]
\[
\text{and} \quad \int_{0+} \frac{1}{\rho^2(u)}du =+\infty,
\]
\item[(2)]there exists a posotive increasing concave function $\kappa(u)$, $u \in (0,\infty)$, such that
\[
|b(x) - b(y)| \le \kappa (|x-y|), \quad \forall x,y \in \mathbb{}
\]
\[
\text{and} \quad  \int_{0+}\frac{1}{\kappa(u)}du =+\infty.
\]
\end{enumerate}
Then the pathwise uniqueness holds for \eqref{ASDE}.
\end{thm}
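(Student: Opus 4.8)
The plan is to follow the classical Yamada--Watanabe argument, so the claimed statement reduces to a standard computation once the coefficient hypotheses are used as intended. Suppose $X=(X_t)$ and $\tilde X=(\tilde X_t)$ are two solutions of \eqref{ASDE} defined on the same filtered probability space, driven by the \emph{same} Brownian motion $B$, with $X_0=\tilde X_0$ a.s. Pathwise uniqueness is the statement $\PP(X_t=\tilde X_t \text{ for all }t\ge 0)=1$, and it suffices to prove $\EE|X_t-\tilde X_t|=0$ for each fixed $t$, since continuity of both paths then upgrades this to indistinguishability.

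The first step is to construct smooth approximations of $x\mapsto|x|$ adapted to $\rho$. Using $\int_{0+}\rho^{-2}(u)\,du=+\infty$ from hypothesis (1), choose $1=a_0>a_1>\cdots\downarrow 0$ with $\int_{a_k}^{a_{k-1}}\rho^{-2}(u)\,du=k$, then continuous functions $\psi_k$ supported in $(a_k,a_{k-1})$ with $0\le\psi_k(u)\le\frac{2}{k\rho^2(u)}$ and $\int\psi_k=1$, and set $\phi_k(x):=\int_0^{|x|}\!\int_0^y\psi_k(u)\,du\,dy$. Then $\phi_k\in C^2(\RR)$, $0\le\phi_k(x)\le|x|$, $\phi_k(x)\to|x|$ pointwise, $|\phi_k'|\le 1$, and $\phi_k''(x)=\psi_k(|x|)$. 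Next, with the stopping times $\tau_M:=\inf\{t:|X_t|\vee|\tilde X_t|\ge M\}$, apply It\^o's formula to $\phi_k(X_{t\wedge\tau_M}-\tilde X_{t\wedge\tau_M})$; the stochastic integral is a genuine martingale up to $\tau_M$, so after taking expectations only the drift and the second-order terms survive:
\[
\EE\,\phi_k\!\left(X_{t\wedge\tau_M}-\tilde X_{t\wedge\tau_M}\right)
=\EE\!\int_0^{t\wedge\tau_M}\!\!\phi_k'(X_s-\tilde X_s)\big(b(X_s)-b(\tilde X_s)\big)\,ds
+\tfrac12\,\EE\!\int_0^{t\wedge\tau_M}\!\!\phi_k''(X_s-\tilde X_s)\big(\sigma(X_s)-\sigma(\tilde X_s)\big)^2\,ds.
\]

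For the second-order term, hypothesis (1) gives $(\sigma(X_s)-\sigma(\tilde X_s))^2\le\rho^2(|X_s-\tilde X_s|)$, which combined with $\phi_k''(|X_s-\tilde X_s|)\le\frac{2}{k\rho^2(|X_s-\tilde X_s|)}$ bounds the integrand by $\frac{2}{k}$; hence that term is at most $t/k\to 0$ as $k\to\infty$. For the drift term, $|\phi_k'|\le 1$ and hypothesis (2) give $|\phi_k'(X_s-\tilde X_s)(b(X_s)-b(\tilde X_s))|\le\kappa(|X_s-\tilde X_s|)$. Letting $k\to\infty$ (monotone/dominated convergence, with dominating bound $2M$ on $[0,\tau_M]$) yields, for $g_M(t):=\EE|X_{t\wedge\tau_M}-\tilde X_{t\wedge\tau_M}|$,
\[
g_M(t)\le\EE\!\int_0^{t\wedge\tau_M}\!\!\kappa(|X_s-\tilde X_s|)\,ds\le\int_0^t\kappa\big(g_M(s)\big)\,ds,
\]
where the last inequality uses Jensen's inequality (concavity of $\kappa$) together with monotonicity of $\kappa$. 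Since $g_M(0)=0$ and $\int_{0+}\kappa^{-1}(u)\,du=+\infty$, Bihari's (Osgood's) inequality forces $g_M\equiv 0$; letting $M\to\infty$ (using $\tau_M\uparrow\infty$, which in the application holds because the coefficients are bounded on $[0,1]$) gives $\EE|X_t-\tilde X_t|=0$ for every $t$, which completes the proof.

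The \emph{main obstacle} is the construction in the second step: choosing the partition $\{a_k\}$ and the bump functions $\psi_k$ so that $\phi_k''$ cancels precisely the modulus $\rho^2$ appearing in the diffusion estimate while retaining $\phi_k\to|x|$ and $|\phi_k'|\le 1$. This is exactly the point at which the divergence of $\int_{0+}\rho^{-2}$ is consumed, and it is the crux of the Yamada--Watanabe method; the localization bookkeeping and the final Gronwall/Osgood step are routine by comparison.
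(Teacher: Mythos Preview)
Your argument is the classical Yamada--Watanabe proof and is correct as written: the construction of the mollifiers $\phi_k$ from the divergence of $\int_{0+}\rho^{-2}$, the It\^o expansion with localization, the $t/k$ bound on the second-order term, and the Jensen/Bihari step on the drift are all standard and handled properly.

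However, note that the paper does \emph{not} prove this theorem. It is stated there purely as a citation --- ``a version of the Yamada--Watanabe criterion cited from \cite[Theorem 1]{YW71}'' --- and is then invoked as a black box to deduce pathwise uniqueness for the one-dimensional SDE \eqref{WF-C}. So there is no ``paper's own proof'' to compare against; you have supplied the original Yamada--Watanabe argument where the paper simply appeals to the literature.
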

Now we are ready to prove the pathwise uniqueness of solution to the SDE \eqref{WF-C}.

\begin{thm}
The SDE \eqref{WF-C}
%\begin{align}\label{sqrtSDE}
%\begin{cases}
%\d X_t =\sqrt{\frac{X_t(1-X_t)}{2N}} \d B_t +\left[- \mz_1 X_{t} + \mz_2 (1 - X_{t})\right] \d t,\\
%X_0 = x \in (0,1),
%\end{cases}
%\end{align}
has a unique strong solution $\{X_t, t\in \RR_+\}$ such that $X_t \in [0,1]$ for all $t\in \RR_+$ almost surely.
\end{thm}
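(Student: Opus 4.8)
The plan is to verify the two conditions of the Yamada--Watanabe criterion (Theorem \ref{YaWa}) for the coefficients of \eqref{WF-C}, namely $b(x) = -\mz_1 x + \mz_2(1-x)$ and $\sz(x) = \sqrt{x(1-x)/(2N)}$, and then combine pathwise uniqueness with existence of a weak solution to conclude existence of a unique strong solution, finally arguing that this solution stays in $[0,1]$. For the drift, $b$ is affine, hence globally Lipschitz, so condition (2) holds with $\kappa(u) = Lu$ for a suitable constant $L$ (here $L = \mz_1+\mz_2$), and $\int_{0+} du/(Lu) = +\infty$. The diffusion coefficient is the only delicate term: I would take $\rho(u) = c\sqrt{u}$ for an appropriate constant $c$ and check the square-root H\"older bound $|\sz(x)-\sz(y)| \le c\sqrt{|x-y|}$, after which $\int_{0+} du/\rho^2(u) = \int_{0+} du/(c^2 u) = +\infty$ is immediate.

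The main technical point is therefore establishing $|\sqrt{x(1-x)} - \sqrt{y(1-y)}| \le c\sqrt{|x-y|}$ on $[0,1]$. I would first note that $g(x) := x(1-x)$ is Lipschitz on $[0,1]$ with $|g(x)-g(y)| \le |x-y|$ (since $|g'| = |1-2x| \le 1$), and then use the elementary inequality $|\sqrt{a}-\sqrt{b}| \le \sqrt{|a-b|}$ valid for all $a,b \ge 0$; composing these gives $|\sz(x)-\sz(y)| \le (2N)^{-1/2}\sqrt{|g(x)-g(y)|} \le (2N)^{-1/2}\sqrt{|x-y|}$, so $c = (2N)^{-1/2}$ works. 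A subtlety worth flagging is that \eqref{WF-C} is only a well-posed SDE when the solution is confined to $[0,1]$, where $x(1-x) \ge 0$; outside this interval the square root is not real. So I would either extend $\sz$ to all of $\RR$ by setting $\sz(x) = \sqrt{(x\vee 0)\wedge 1 \cdot (1-x)\vee 0 ...}$ — more cleanly, $\sz(x) = \sqrt{(x^+\wedge 1)(1-x)^+}/\sqrt{2N}$ — which is still $1/2$-H\"older on all of $\RR$ by the same composition argument, and then show a posteriori that the solution never leaves $[0,1]$.

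For the confinement $X_t \in [0,1]$, I would appeal to a comparison argument: the boundaries $0$ and $1$ are absorbing (or at least non-attainable from the interior in finite time in the sense needed) because at $x=0$ the drift $b(0) = \mz_2 \ge 0$ points inward and $\sz(0) = 0$, and at $x=1$ the drift $b(1) = -\mz_1 \le 0$ points inward and $\sz(1)=0$; a standard comparison theorem (or the explicit construction of the Wright--Fisher diffusion as a time-changed process, or Feller's boundary classification) then shows that starting from $x \in [0,1]$ the process stays in $[0,1]$ almost surely. Existence of a weak solution taking values in $[0,1]$ is classical — it follows from the martingale problem for $\mathcal{L}$ with the compact state space $[0,1]$ being well-posed (which is assumed earlier in the excerpt and proved in the general $r$-allele appendix), or directly from a limit of the discrete chains. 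Combining weak existence with pathwise uniqueness via the Yamada--Watanabe theorem yields a unique strong solution, completing the proof. The only real obstacle is bookkeeping around the extension of $\sz$ off $[0,1]$ and the confinement argument; the H\"older estimate itself is routine.
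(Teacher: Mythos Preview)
Your proposal is correct and follows essentially the same approach as the paper: verify the Yamada--Watanabe conditions with $\rho(u)\propto\sqrt{u}$ for the diffusion coefficient and $\kappa(u)\propto u$ for the drift. Your H\"older estimate is obtained by composing $|\sqrt{a}-\sqrt{b}|\le\sqrt{|a-b|}$ with the Lipschitz bound on $x(1-x)$, whereas the paper argues directly via the intermediate term $\sqrt{y(1-x)}$; you are also more careful than the paper about extending $\sigma$ off $[0,1]$ and about the confinement $X_t\in[0,1]$, which the paper's own proof does not address explicitly.
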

\begin{proof}
%Here we only focus on the uniqueness to solutions to \eqref{WF-C}.
Let $\sigma(x) = \sqrt{x(1-x)}$ and  $b(x) = -\mu_1 x +\mu_2 (1-x)$. Then, for any $x,y \in [0,1]$, we have
	\begin{align*}\label{sqrt}
	&|\sigma(x) - \sigma(y)|^2=\left[\sqrt{x(1-x)}-\sqrt{y(1-y)}\right]^2   \nonumber \\
       =& \left[\sqrt{x(1-x)}-\sqrt{y(1-x)}+\sqrt{y(1-x)}-\sqrt{y(1-y)}\right]^2  \nonumber \\
	\le& 2(\sqrt{x(1-x)}-\sqrt{y(1-x)})^2 + 2(\sqrt{y(1-x)}-\sqrt{y(1-y)})^2 \nonumber \\
	\le& 2 |1-x|(\sqrt{x}-\sqrt{y})^2 + 2 |y|(\sqrt{1-x}-\sqrt{1-y})^2 \nonumber \\
	\le& 2 |1-x|(\sqrt{x}-\sqrt{y})(\sqrt{x}+\sqrt{y}) + 2 |y|(\sqrt{1-x}-\sqrt{1-y})(\sqrt{1-x}+\sqrt{1-y}) \nonumber \\
	\le& 2|1-x||x-y|+2|y||x-y|\nonumber \\
       \le& 2|x-y|.
	\end{align*}
Choose $\rho(u) = \sqrt{u/N}$, then $\rho$ is an increasing function on $\RR_+$ such that $|\sigma (x) - \sigma (y)| \leq \rho (|x-y|)$ for all $x,y \in [0,1]$,  and
\[
\int_{0+} \frac{1}{\rho^2(u)}du = N\int_{0+} \frac{1}{u}du =\infty.
\]
On the other hand, we can also choose $\kappa(u) = (|\mu_{1}| + |\mu_{2}|)u$. Then $\kappa$ is a increasing  function such that $|b(x) - b(y)| \leq \kappa (|x-y|)$, and
\[
\int_{0+} \frac{1}{\kappa(u)}du = (|\mu_1| + |\mu_2|)^{-1} \int_{0+} \frac{1}{u}du =\infty.
\]
By applying Theorem \ref{YaWa}, we deduce the pathwise uniqueness for SDE \eqref{WF-C}  follows immediately, which complete the proof.
\end{proof}

\subsection{The martingale solution when $r\geq2$}

We first give the existence of martingale solution.

For each $t\geq0$, define $X(t):\mathcal{C}([0,\infty),I)\rightarrow I$ by $X(t)(\omega)=\omega(t)$, and let $\mathcal{F}_{t}$ and $\mathcal{F}$ be the $\sigma$-fields generated by $\{X(s):0\leq s\leq t\}$ and $\{X(s):0\leq s<\infty\}$, respectively. Define the operator $\mathcal{L}$ by \eqref{oper}, that is,
\be\lb{Aoper}
\mathcal{L} = \frac{1}{4N} \sum_{i, j = 1}^{r-1} a_{ij} (x) \frac{\p^2}{\p x_i \p x_j} + \sum_{i=1}^{r-1} b_i (x) \frac{\p}{\p x_i}, \qd \forall x \in I,
\de
where matrix $A(x) = \lt( a_{ij} (x) \rt)_{1 \leq i, j \leq r-1} \in \RR^{(r-1) \times (r-1)}$ and vector $b(x) = \lt( b_i (x) \rt)_{1 \leq i \leq r-1} \in \RR^{r-1}$ are given as
\begin{align*}
a_{ij} (x) &= x_i (\dz_{ij} - x_j), \qd x \in I, \\
b_i (x) &= - x_i \sum_{j=1}^r u_{ij} + \sum_{j=1}^r x_j u_{ji}, \qd x \in I,
\end{align*}
 and $x_{r}=1-\sum_{i=1}^{r-1}x_{i}\in[0,1]$. Let $x\in I$, a probability measure $\mathbb{P}$ on $\left(\mathcal{C}([0,\infty),I),\mathcal{F}\right)$ is said to solve the martingale problem for $\mathcal{L}$ starting at $x$ if $\mathbb{P}\left(X(0)=x\right)=1$ and
$$
\left\{f\left(X(t)\right)-\int_{0}^{t}\mathcal{L}f\left(X(s)\right)ds,\mathcal{F}_{t}:t\geq0\right\}
$$
is a $\mathbb{P}$-martingale for each $f\in\mathcal{C}^{2}(I)$.

\begin{thm}
Define the operator $\mathcal{L}$ by \eqref{Aoper}. Then, for each $x\in I$, there is only one solution to the martingale problem for $\mathcal{L}$ starting at $x$.
\end{thm}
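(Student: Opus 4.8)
The plan is to follow the semigroup approach of \cite{Eth76}: I will show that the operator $\mathcal{L}$, regarded as a densely defined operator on $\mathcal{C}(I)$, is closable and that its closure $\overline{\mathcal{L}}$ generates a strongly continuous, positive, conservative contraction (Feller) semigroup $(P_t)_{t\ge0}$ on $\mathcal{C}(I)$. Since $I$ is compact, once this is done, existence of a solution to the martingale problem for $\mathcal{L}$ starting at any $x\in I$ follows from the positive maximum principle together with denseness of the domain (cf.\ \cite[Theorem 4.5.4]{EK09}), and uniqueness follows from the equivalence between well-posedness of the martingale problem and the closure of the generator generating a Feller semigroup (cf.\ \cite[Theorem 4.4.1]{EK09}); the constructed Feller process then realizes the unique solution. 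The two analytic inputs needed are: (a) $\mathcal{L}$ satisfies the \emph{positive maximum principle} on $\mathcal{C}^2(I)$, and (b) the range of $\lambda-\mathcal{L}$ is dense in $\mathcal{C}(I)$ for some (hence every) $\lambda>0$.

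For (a), suppose $f\in\mathcal{C}^2(I)$ attains $\sup_I f=f(x_0)\ge0$ at $x_0\in I$. If $x_0$ is interior, then $\nabla f(x_0)=0$ and $\nabla^2 f(x_0)$ is negative semidefinite while $A(x_0)$ is positive semidefinite, so $\langle A(x_0),\nabla^2 f(x_0)\rangle_{\rm HS}\le0$ and hence $\mathcal{L}f(x_0)\le0$. If $x_0$ lies on the face $\{x_i=0\}$, one uses that the $i$-th row and column of $A$ vanish there (since $a_{ii}(x)=x_i(1-x_i)$ and $a_{ij}(x)=-x_ix_j$), that $b_i(x)=\sum_{j=1}^r x_j u_{ji}\ge0$ there so the drift points into $I$ while $\partial_i f(x_0)\le0$ at a maximum on that face, and that the tangential part of the Hessian is again $\le0$; the analogous statement holds on $\{x_r=0\}$, i.e.\ $\sum_j x_j=1$, and, by recursion, on every lower-dimensional stratum of the simplex. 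Assembling these contributions gives $\mathcal{L}f(x_0)\le0$. In particular $\mathcal{L}$ is dissipative, and $\mathcal{L}1=0$ gives conservativeness.

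For (b), the key structural fact is that $\mathcal{L}$ maps the finite-dimensional space $\mathcal{P}_n$ of polynomials of total degree $\le n$ into itself: in the second-order part a coefficient of degree $\le2$ multiplies a second derivative, and in the first-order part a coefficient of degree $\le1$ multiplies a first derivative, so in neither case does the degree increase. Hence for $\lambda>0$, $\lambda-\mathcal{L}$ maps $\mathcal{P}_n$ into itself, and it is injective there: if $\mathcal{L}g=\lambda g$ with $g\in\mathcal{P}_n$ and, say, $\max_I g>0$, then the positive maximum principle at a maximizer $x_0$ gives $0<\lambda g(x_0)=\mathcal{L}g(x_0)\le0$, a contradiction, so $g\equiv0$. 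By finite-dimensionality $\lambda-\mathcal{L}$ is then surjective onto $\mathcal{P}_n$; taking the union over $n$, $\mathrm{Range}(\lambda-\mathcal{L})$ contains all polynomials, which are dense in $\mathcal{C}(I)$ by the Stone--Weierstrass theorem. Combining (a) and (b) with the Hille--Yosida--Ray theorem shows that $\overline{\mathcal{L}}$ generates a Feller semigroup and that polynomials form a core; the convergence $\|t^{-1}(P_tf-f)-\mathcal{L}f\|\to0$ as $t\to0^+$ for $f\in\mathcal{C}^2(I)$ (the fact already recorded in \eqref{opde} via \cite[Lemma 9.3.1]{Nor72}) shows that $\mathcal{C}^2(I)$ lies in the domain of $\overline{\mathcal{L}}$ with $\overline{\mathcal{L}}$ extending $\mathcal{L}$ there, so the Feller process associated with $(P_t)$ indeed solves the martingale problem for $\mathcal{L}$ in the stated form, and is its unique solution.

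The step I expect to be the main obstacle is the boundary verification of the positive maximum principle in part (a): one has to check that on every face and lower-dimensional stratum of the simplex $I$ the drift $b$ is inward-pointing and the diffusion matrix $A$ degenerates in precisely the directions normal to that stratum, so that no positive contribution to $\mathcal{L}f(x_0)$ can arise at a boundary maximum. For $r=2$ this is bypassed altogether by the Yamada--Watanabe argument given above, which yields pathwise uniqueness and hence uniqueness in law for the martingale problem.
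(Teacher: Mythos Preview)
Your proposal is correct and takes a genuinely different route from the paper's proof. The paper establishes \emph{existence} by the retract/SDE method of \cite[Section~2]{Eth76}: one solves the extended SDE with coefficients $(A/2N)\circ\rho$ and $b\circ\rho$ on $\mathbb{R}^{r-1}$, and then shows the solution stays in $I$ by verifying on each face $I_i$ the two conditions $\langle\nu^i,\tfrac{A(x)}{2N}\nu^i\rangle=0$ and $\langle\nu^i,b(x)\rangle\ge0$ (exactly the degeneracy and inward-drift conditions you identified as the ``main obstacle'' in your positive maximum principle step). For \emph{uniqueness}, the paper does not use the polynomial-invariance/Hille--Yosida argument at all; instead it invokes \cite[Theorem~1]{Eth76} together with the derivative estimates of Lemma~\ref{regular}, i.e.\ the regularity of $P_t f$ established earlier in the paper. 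By contrast, your approach handles existence and uniqueness simultaneously via the Feller-generator route: the positive maximum principle yields dissipativity, and the invariance of each $\mathcal{P}_n$ under $\mathcal{L}$ gives range density of $\lambda-\mathcal{L}$ without any PDE regularity input. Your method is more self-contained and avoids both the SDE construction and the separate appeal to Lemma~\ref{regular}; the paper's method is more probabilistic for existence and recycles an analytic lemma already proved for other purposes. Note that the explicit face-by-face computation the paper carries out for its condition~\eqref{condition} is precisely the computation you would need to complete your boundary verification in~(a), so in practice the two proofs share that core calculation.
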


\begin{proof}
{\bf Existence:} Define the retract $\rho:\mathbb{R}^{r-1}\rightarrow I$ by setting $\rho(x)=y$ if $\inf\{|z-x|:z\in I\}=|y-x|$, and for $i=1,\cdots,r$, define $H_{i}\subseteq\mathbb{R}^{r-1},K_{i}\subseteq\mathbb{R}^{r-1}$ and $\nu^{i}\in\mathbb{R}^{r-1}$ by
\begin{align*}
H_{i}=&\left\{x\in\mathbb{R}^{r-1}:x_{i}\geq0\right\}, \qquad i\neq r,\\
H_{r}=&\left\{x\in\mathbb{R}^{r-1}:\sum_{j=1}^{r-1}x_{j}\leq 1\right\},\\
I_{i}=&I\cap\partial H_{i},\\
\nu_{j}^{i}=&\delta_{ij}-\delta_{ir}.
\end{align*}
Fix $x\in I$. Since both $\frac{A}{2N}\circ\rho$ and $b\circ\rho$ are bounded and continuous, the stochastic integral equation
\begin{align*}
X_{t}=x+\int_{0}^{t}\left(\frac{A}{2N}\circ\rho\right)^{\frac{1}{2}}(X_{s})dB_{s}+\int_{0}^{t}\left(b\circ\rho\right)(X_{s})ds
\end{align*}
has a solution $\left(\Omega,\hat{\mathcal{F}},\hat{\mathcal{F}}_{t},B_{t},X_{t},\hat{\mathbb{P}}\right)$.

Then by reproducing the same argument as in \cite[Section 2]{Eth76}, the existence of the martingale problem reduces to the validity of the following conditions
 \begin{align}\label{condition}
 \left\langle\nu^{i},\frac{A(x)}{2N}\nu^{i}\right\rangle=0\quad \text{and}\quad \left\langle\nu^{i},b\right\rangle \ge 0,
 \end{align}
 on $I_i$ for all $i=1,\cdots, r$.

%{\color{red}Consider the following equation
%\begin{align*}
%\left\langle\nu^{i},X_{t_{1}}-X_{t_{0}}\right\rangle
%=\int_{t_{0}}^{t_{1}}\left\langle\nu^{i},\left(\frac{A}{2N}\circ\rho\right)^{\frac{1}{2}}(X_{s})dB_{s}\right\rangle+\int_{t_{0}}^{t_{1}}\left\langle\nu^{i},(b\circ\rho)(X_{s})\right\rangle ds
%\end{align*}
%for $i=1,\cdots,r$.

%Now, we claim that
 %\begin{align}\label{condition}
% \left\langle\nu^{i},\frac{A(x)}{2N}\nu^{i}\right\rangle=0\quad \text{and}\quad \left\langle\nu^{i},b\right\rangle \ge 0,
% \end{align}
% on $I_i$ for all $i=1,\cdots, r$.
%Then \eqref{condition} implies that
%\begin{align*}
%\hat{\mathbb{P}}\left(X_{s}\in H_{i}^{c}~{\rm for}~t_{0}\leq s\leq t_{1}~{\rm and}~\langle\nu^{i},X_{t_{1}}-X_{t_{0}}\rangle<0\right)=0
%\end{align*}  }
%whenever $0<t_{0}<t_{1}$. Summing over all such rational $t_{0}$ and $t_{1}$ and over $i=1,\cdots,r$, we obtain
%\begin{align*}
%\hat{\mathbb{P}}\left(X_{s}\in I^{c}~{\rm for~some}~s>0\right)=0.
%\end{align*}
%Therefore, the $\hat{\mathbb{P}}$-distribution of $(X_{t})_{t\geq0}$ is supported in $\mathcal{C}([0,\infty),I)$, and so, by It\^{o}'s formula, it solves the martingale problem for $\mathcal{L}$ starting at $x$.

Now, it remains to verify \eqref{condition} by studying the following two cases.

\underline{Case $1$.} For $i=1,\cdots,r-1$, we have $\nu_{j}^{i}=\delta_{ij}$ for $j=1,\cdots,r-1$. Then for any $x\in I_{i}$, we have
\begin{align*}
\left\langle\nu^{i},\frac{A(x)}{2N}\nu^{i}\right\rangle=\frac{1}{2N}\sum_{j,k=1}^{r-1}a_{jk}(x)\nu_{j}^{i}\nu_{k}^{i}=\frac{1}{2N}a_{ii}(x)=0
\end{align*}
and
\begin{align*}
\left\langle\nu^{i},b\right\rangle=\sum_{j=1}^{r-1}\nu_{j}^{i}b_{j}(x)=b_{i}(x)=-x_i\sum_{j=1}^r u_{ij}+\sum_{j=1}^{r}x_{j}u_{ji}=\sum_{j=1}^{r}x_{j}u_{ji}\geq0,
\end{align*}
where the last equality holds because $x_i=0$ when $x\in I_{i}, i=1,\cdots,r-1$.

\underline{Case $2$.} For $i=r$, we have $\nu_{j}^{r}=-1$ for $j=1,\cdots,r-1$. Then for any $x\in I_{r}$, we have
\begin{align*}
\left\langle\nu^{r},\frac{A(x)}{2N}\nu^{r}\right\rangle=\frac{1}{2N}\sum_{j,k=1}^{r-1}a_{jk}(x)\nu_{j}^{r}\nu_{k}^{r}
=&\frac{1}{2N}\sum_{j,k=1}^{r-1}x_{j}(\delta_{jk}-x_{k})\\
=&\frac{1}{2N}\bigg(\sum_{j=1}^{r-1}x_{j}-\sum_{j=1}^{r-1}x_{j}\sum_{k=1}^{r-1}x_{k}\bigg)=0,
\end{align*}
where the last equality is due to the fact that $\sum_{i=1}^{r-1}x_{i}=1$ on $I_r$.
In addition,  note  that $x_{r}=1-\sum_{i=1}^{r-1}x_{i}=0$, we have
\begin{align*}
\left\langle\nu^{r},b\right\rangle=&\sum_{i=1}^{r-1}\nu_{i}^{r}b_{i}(x)=-\sum_{i=1}^{r-1}b_{i}(x)\\
=&\sum_{i=1}^{r-1}x_i \sum_{j=1}^r u_{ij}-\sum_{i=1}^{r-1}\sum_{j=1}^r x_j u_{ji}\\
=&\sum_{i=1}^{r-1}x_i \sum_{j=1}^r u_{ij}-\sum_{i=1}^{r-1}\sum_{j=1}^{r-1} x_j u_{ji}\\
=&\sum_{i=1}^{r-1}\sum_{j=1}^{r-1}x_i  u_{ij}-\sum_{i=1}^{r-1}\sum_{j=1}^{r-1} x_j u_{ji}+\sum_{i=1}^{r-1}x_{i}u_{ir}=\sum_{i=1}^{r-1}x_{i}u_{ir}\geq 0.
\end{align*}
In other words, \eqref{condition} holds for all $i = 1,\dots, r$.

\noindent {\bf Uniqueness:} By the same arguments as the proof of \cite[Theorem 1]{Eth76}, the uniqueness of the martingale solution follows immediately from Lemma \ref{regular}.

\end{proof}

\end{appendix}

\noindent
\textbf{Acknowledgement.} The research of L.\ Xu is supported by National Natural Science Foundation of China No. 12071499, The Science and Technology Development Fund (FDCT) of Macau S.A.R. FDCT 0074/2023/RIA2, and University of Macau grant MYRG-GRG2023-00088-FST. J. Xiong was supported by National Natural Science Foundation of China grants 12326368, National Key R\&D Program of China grant 2022YFA1006102. J. Zheng has been supported by National Natural Science Foundation of China grant (Grant No. 11901598) and Guangdong Characteristic Innovation Project No.2023KTSCX163. P.\ Chen has been supported in part by National Natural Science Foundation of China grant (Grant No. 12301176) and Natural Science Foundation of Jiangsu Province grant (Grant No. BK20220867).

\bibliographystyle{amsplain}

\end{document}